\documentclass[12pt,eqno]{article}
\usepackage{}
\date{}
\usepackage{amsfonts}
\usepackage{bbm}
\usepackage{bm}
\usepackage{amsmath}
\usepackage{mathrsfs}
\usepackage{amssymb}
\usepackage{amsmath,color}
\usepackage{amsthm}
\usepackage{amstext}
\usepackage{amsopn}
\usepackage{texdraw}
\usepackage{graphicx}
\usepackage{multirow}
\usepackage{lscape}
\usepackage{float}
\usepackage[palatino,gill,courier]{altfont}

\usepackage{subfigure}
\numberwithin{equation}{section}
\newtheorem{theorem}{Theorem}[section] 

\newtheorem{lemma}[theorem]{Lemma}

\newtheorem{remark}[theorem]{Remark}

\newcommand{\norm}[1]{\left\| #1 \right\|}
\newcommand{\inner}[2]{\left\langle #1, #2 \right\rangle}
\newcommand{\R}{\mathbb{R}}
\newcommand{\D}{\mathcal{D}}
\newcommand{\Hil}{\mathcal{H}}

\newcommand{\kapp}{\kappa}
\newcommand{\dom}[1]{\mathcal{D}\left( #1 \right)}

\newcommand{\M}[1]{M_{#1}}

\newcommand{\oone}{o(1)}
\DeclareRobustCommand{\R}{\mathbb{R}}

\begin{document}
	
	\title{Stability of Magnetizable Piezoelectric Beam Systems with Two Fractional Infinite Memory Terms\footnote{This work is supported by NSFC (12671232).}}
	\author{Jun Zhou\footnote{Corresponding author. Email: jzhou@swu.edu.cn} ~~~~~~ Linfeng Duan\footnote{Email: duanlinfeng29@163.com}\\\\
		{\small School of Mathematics and Statistics, Southwest University},\\
		{\small Chongqing 400715, People's Republic of China}}
	\date{}
	\maketitle
	
\begin{abstract}
	In this paper, we investigate a class of magnetizable piezoelectric beam systems with two fractional infinite memory terms, whose fractional orders are denoted by \(\theta_1, \theta_2 \in [0,1]\). By introducing the Dafermos history variable, we construct an augmented state space and reformulate the system as an abstract evolution equation. The well-posedness of the system is established via semigroup theory. Through a frequency-domain analysis, we characterize the asymptotic behavior of the solutions. We show that the system is exponentially stable if \(\theta_1 = \theta_2 = 1\), and is polynomially stable in all other cases, with the explicit decay rate \(t^{-\frac{1}{2-2\theta_0}}\), where \(\theta_0 = \min\{\theta_1, \theta_2\}\). Furthermore, the optimality of the obtained decay rate is verified.

	\textbf{Keywords}: magnetizable piezoelectric system; fractional infinite memory; frequency-domain analysis; polynomial stability; exponential stability
\end{abstract}

\section{Introduction}
Let $H$ be a complex Hilbert space with inner product $\langle\cdot,\cdot\rangle$, which generates the norm $\|\cdot\|$ on $H$. Let $A:\D(A)\to H$ be a strictly positive self-adjoint linear operator on $H$ with compact resolvent, where $\D(A)$ denotes the domain of $A$. The spectrum of $A$ consists solely of isolated eigenvalues $\{\xi_n\}_{n\in\mathbb{N}}$ satisfying
\[
0 < \xi_1 < \xi_2 < \cdots < \xi_n < \xi_{n+1} < \cdots,\qquad \lim_{n\to\infty}\xi_n = \infty.
\]
For each $n\in\mathbb{N}$, let $e_n\in \D(A)$ be the normalized eigenvector corresponding to $\xi_n$, so that $\|e_n\|=1$ and $A e_n = \xi_n e_n$. The sequence $\{e_n\}_{n=1}^\infty$ forms an orthonormal basis of $H$.

We set $\D(A^0)\triangleq H$. For any $\alpha\in(0,1]$, define
\begin{equation}\label{DAalpha}
	\D(A^{\frac{\alpha}{2}})\triangleq\left\{\phi\in H\,\bigg|\,\|A^{\frac{\alpha}{2}}\phi\|<\infty\right\},
\end{equation}
where the fractional power $A^{\frac{\alpha}{2}}$ acts on every $\phi\in H$ via the series representation
\[
A^{\frac{\alpha}{2}}\phi=\sum_{n=1}^\infty\xi_n^{\frac{\alpha}{2}}\langle\phi,e_n\rangle e_n.
\]
The associated norm identity reads
\[
\|A^{\frac{\alpha}{2}}\phi\|^2=\sum_{n=1}^\infty\xi_n^\alpha\bigl|\langle\phi,e_n\rangle\bigr|^2.
\]
The space $\D(A^{\frac{\alpha}{2}})$ is a Hilbert space when equipped with the inner product
\begin{equation}\label{innerDAalpha}
	\langle\phi_1,\phi_2\rangle_{\D(A^{\frac{\alpha}{2}})}\triangleq\left\langle A^{\frac{\alpha}{2}}\phi_1,A^{\frac{\alpha}{2}}\phi_2\right\rangle,\qquad
	\forall\,\phi_1,\phi_2\in \D(A^{\frac{\alpha}{2}}).
\end{equation}

In this paper, we consider the following abstract magnetizable piezoelectric system with two fractional infinite memory terms
\begin{equation}\label{eq:original}
	\begin{cases}
		\varrho v_{tt} = -\alpha Av(t) + \gamma \beta Ap(t) + \displaystyle\smallint_{0}^{\infty}g_{1}(s)A^{\theta_{1}}v(t - s)\,ds,&t>0,\\[1.2em]
		\mu p_{tt} = -\beta Ap(t) + \gamma \beta Av(t) + \displaystyle\smallint_{0}^{\infty}g_{2}(s)A^{\theta_{2}}p(t - s)\,ds,&t>0,\\[1.2em]
		v(-t) = h_1(t), ~p(-t)=h_2(t),& t>0, \\
		v(0)=v_0\triangleq\lim_{t\to0^+}h_1(t),\ v_t(0)=v_1\triangleq-\lim_{t\to0^+}\frac{d}{dt}h_1(t),\\
		p(0)=p_0\triangleq\lim_{t\to0^+}h_2(t),\ p_t(0)=p_1\triangleq-\lim_{t\to0^+}\frac{d}{dt}h_2(t).
	\end{cases}
\end{equation}
We make the following assumptions:
\begin{description}
	\item[(A1)] $g_j\in L^1(\R_+) \cap H^1(\R_+)$ satisfies $0<\kapp_{j} := \smallint_{0}^{\infty} g_{j}(s)\,ds<\infty$ and $g_j(s)>0$ for $s\in \R_+$, \(j=1,2\);
	\item[(A2)] $ \frac{d}{ds}g_j(s) < 0$ for a.e. $s \in \R_+$ and $j=1,2$, and there exist constants $c_i > 0 ,i=0,1,2,3$,  such that $-c_0g_1(s) \leq g_1'(s) \leq -c_1g_1(s), -c_2g_2(s) \leq g_2'(s) \leq -c_3g_2(s)$;
	\item[(A3)] $\theta_{1} \in [0,1]$, $\theta_{2} \in [0,1]$, the constants $\varrho,\mu,\alpha,\beta,\gamma $ are all positive and satisfy
	\begin{equation}\label{asscs}
		\begin{split}
			&\alpha_{1}\triangleq \alpha - \gamma^2\beta>0,~\alpha-\kappa_1 \xi_1^{\theta_1-1}>0,~\beta-\kappa_2 \xi_1^{\theta_2-1}>0,\\
			&\left(\alpha-\kappa_1 \xi_1^{\theta_1-1}\right)\left(\beta-\kappa_2 \xi_1^{\theta_2-1}\right)>\beta^2\gamma^2.
		\end{split}
	\end{equation}
\end{description}

Piezoelectric materials are a representative class of electromechanically coupled smart materials. Owing to their capability of bidirectional energy conversion between mechanical and electrical domains, they have been widely applied in ultrasonic transducers, actuators, sensors and many other fields \cite{brunner2005potential,yeh2008integrated}. Magnetizable piezoelectric composites, fabricated by combining piezoelectric and magnetostrictive phases, realize multi-field coupling among the mechanical, electric and magnetic fields, and thus exhibit significant advantages in intelligent vibration control, energy harvesting and other engineering scenarios.

Classical piezoelectric beam models are based on the electrostatic approximation, which neglects the influence of dynamic magnetic effects. In high-frequency or strong-magnetic-field environments, however, such electromagnetic dynamic effects cannot be ignored. To overcome this limitation, Morris and \"Ozer \cite{morris2013strong} employed Hamilton's variational principle to establish, for the first time, a piezoelectric beam model with full magnetic effects, whose dynamic behavior is governed by the following strongly coupled system:
\begin{equation}\label{eq:initial}
  \begin{cases}
    \varrho v_{tt}(x,t) - \alpha v_{xx}(x,t) + \gamma\beta p_{xx}(x,t) = 0, \hspace{5mm} x \in (0,L),\ t>0,\\[2mm]
    \mu p_{tt}(x,t) - \beta p_{xx}(x,t) + \gamma\beta v_{xx}(x,t) = 0,
    \hspace{5mm} x \in (0,L),\ t>0.
  \end{cases}
\end{equation}
Here \(v(x,t)\) denotes the transverse displacement of the beam, and \(p(x,t)\) denotes the total electric displacement load in the transverse direction at each point. The coefficients \(\varrho, \alpha, \gamma, \mu, \beta\) are all positive constants: \(\varrho\) is the mass density per unit volume, \(\alpha\) the elastic stiffness, \(\gamma\) the piezoelectric coefficient, \(\mu\) the magnetic permeability, and \(\beta\) the dielectric impermeability coefficient.

Within the framework of the fully magnetic piezoelectric beam model, the stability problem has been extensively studied, and the conclusions turn out to depend crucially on the manner in which damping is applied. Morris and \"Ozer systematically analyzed single-boundary feedback control under voltage control. They showed that, when the ratio of the wave speeds is irrational, the system is strongly stable but not exponentially stable; when this ratio is a quotient of two odd integers, the system fails even to be strongly stable; and exponential stabilization is achievable only when the ratio corresponds to a coprime pair consisting of one odd and one even integer \cite{morris2013strong,morris2014modeling}. In \cite{morris2014comparison}, they extended the analysis to current control and revealed that the corresponding control operator is bounded and compact, which rules out exponential stability and leaves the system merely strongly stable. Ramos et al. \cite{ramos2018exponential} introduced internal frictional damping in only one of the two equations and obtained exponential stability via the energy method. Subsequently, in \cite{ramos2019equivalence}, they further proved that applying frictional damping simultaneously to both coupled variables at the same boundary yields exponential stability. These results reveal that the type and location of damping have a fundamental impact on the stability of the system. While these studies laid the foundation for the vibration control of magnetizable piezoelectric beams, they did not take into account the intrinsic memory properties of piezoelectric materials.

In practical engineering, piezoelectric materials generally exhibit memory effects, in the sense that their mechanical and electrical behavior depends not only on the current state but also on the past history. Such memory effects, rooted in the Boltzmann superposition principle, are usually modeled by Volterra integrals over the infinite past, which characterize the dependence of the current state on the history of the system. Dafermos \cite{dafermos1970asymptotic} introduced the history variable transformation method, providing a unified framework for the mathematical analysis of memory-dissipative systems. Based on this method, various structural systems with memory-type dissipation have been extensively studied. For instance, Astudillo and Oquendo \cite{astudillo2021stability} introduced a fractional-order operator into the memory term of the Timoshenko beam system and obtained a polynomial decay rate for the solutions.

In recent years, the concept of memory damping has gradually been extended to magnetizable piezoelectric beam systems. According to the location of the memory term, such damping can be classified into internal memory damping acting in the displacement equation and boundary memory damping. For the internal case, Zhang et al.~\cite{zhang2022stability} investigated a multi-dimensional nonlinear magnetizable piezoelectric beam system on a bounded domain, in which a single viscoelastic infinite memory term $\smallint_0^\infty g(s)\Delta v(x,t-s)\,ds$ is imposed only on the displacement equation. By means of semigroup theory and the Banach fixed-point theorem, they established the well-posedness of the system; via frequency-domain analysis, they proved that the corresponding linear coupled system can be indirectly exponentially stabilized by this single memory term, and, notably, the exponential decay rate is independent of the relationships among the wave speeds. They further obtained exponential stability of the nonlinear system for small initial data by the energy method. Subsequently, Zhang et al.~\cite{zhang2025decay} were the first to introduce a fractional-order operator into the memory term of magnetizable piezoelectric beam systems. They considered a strongly coupled hyperbolic system whose memory term takes the form $\smallint_0^\infty g(s)A^\alpha v(t-s)\,ds$ with a fractional order $\alpha\in[0,1)$, where $A$ is a positive self-adjoint operator (the beam model being recovered by $A=-\partial_x^2$). By the frequency-domain method, they proved that the system is polynomially stable with the explicit decay rate $t^{-\frac{1}{2-2\alpha}}$, which depends only on $\alpha$. In the case of exponentially decreasing kernels, they derived the asymptotic expressions of the spectrum of the system operator and, on this basis, verified the optimality of the obtained decay rate through spectral analysis. For boundary memory damping, Poblete et al.~\cite{poblete2023polynomial} studied boundary dissipation with fractional-order derivatives and obtained polynomial decay behavior with rate $t^{-\frac{1}{2-2\alpha}}$ through frequency-domain methods. Feng and {\"O}zer \cite{feng2023stability} analyzed a piezoelectric beam model with long-range memory effects on the boundary, deriving a general decay rate determined by the memory kernel function; when the kernel decays exponentially, they further established exponential stability.

In addition, significant progress has been made on piezoelectric beam systems with time delays and thermal effects. By employing Kato's variable norm technique together with the Lyapunov functional method, Liao \cite{liao2025exponential} proved the exponential stability of magnetizable piezoelectric beams with degenerate memory and time-varying delays. Via the energy method, Ramos et al.\ \cite{ramos2021exponential} established the exponential stability of both fully dynamic and electrostatic piezoelectric beams with distributed time-delay damping feedback; further related results can be found in \cite{peng2019time,peng2019vibration}. Akil \cite{akil2022stability} investigated the stability of magnetizable piezoelectric beam systems coupled with non-Fourier thermal effects, showing that the coupled system with heat conduction governed by the Coleman--Gurtin law is exponentially stable, whereas the one governed by the purely memory-type Gurtin--Pipkin law admits only polynomial decay of order $t^{-1}$. From the viewpoint of infinite-dimensional dynamical systems, Liu et al.\ \cite{liu2025long} studied the long-time behavior of magnetizable piezoelectric systems with historical memory and time-varying delays, and proved the existence of global and exponential attractors.

However, most of the existing studies consider memory damping acting only on the displacement equation. In real physical materials, mechanical deformation and dielectric relaxation processes often exhibit history-dependent memory effects simultaneously. Therefore, a magnetizable piezoelectric beam model in which fractional-order infinite memory damping is applied to both equations, accounting for the mechanical and electrical memory effects simultaneously, better reflects physical reality. Despite the aforementioned studies, systematic theoretical results for such strongly coupled systems with two memory terms are still lacking. Motivated by this observation and based on the single-memory case studied by Zhang et al.\ \cite{zhang2025decay}, in the present paper we introduce fractional-order infinite memory damping into both the displacement equation and the charge equation, thereby constructing the strongly coupled system \eqref{eq:original} with dual memory terms.

We aim to address the following questions: when both equations of the strongly coupled system \eqref{eq:original} contain infinite memory terms, can the system be stabilized by these memory terms? If so, what type of decay can be achieved, and how does the decay rate of the solutions depend on the memory orders \(\theta_1,\theta_2\)?

The main results of this paper are summarized as follows:
\begin{itemize}
	\item System \eqref{eq:original} is well-posed: the operator \(B\) associated with \eqref{eq:original} generates a \(\mathcal{C}_0\) semigroup of contractions \(\{S(t)\}_{t\ge0}\) on the extended state space \(\mathcal{H}\), and hence the system admits a unique mild solution for every initial state \(X_0\in\mathcal{H}\), which becomes a classical solution provided \(X_0\in\mathcal{D}(B)\) (Theorem \ref{thmwellposednerss});
	\item If at least one of the fractional orders is strictly smaller than \(1\), that is, \(\theta_1<1\) or \(\theta_2<1\), then the system is polynomially stable with the explicit decay rate \(t^{-\frac{1}{2-2\theta_0}}\), where \(\theta_0=\min\{\theta_1,\theta_2\}\). In other words, the asymptotic behavior of the system is dominated by the smaller fractional order (Theorem \ref{thmpoly});
	\item When the memory kernel is of exponential type, the polynomial decay rate \(t^{-\frac{1}{2-2\theta_0}}\) is optimal, which is verified by the spectral analysis of the system operator (Theorem \ref{thmopt});
	\item The system is exponentially stable if and only if \(\theta_1=\theta_2=1\) (Theorem \ref{thmex}).
\end{itemize}

\begin{remark}\label{rem:comparison}
	It is worthwhile to compare the results obtained above with the closely related works \cite{zhang2022stability,zhang2025decay}, in which only a single infinite memory term is imposed on the displacement equation of magnetizable piezoelectric beam systems.
	
	(i) In \cite{zhang2022stability}, a multi-dimensional nonlinear magnetizable piezoelectric beam system with a single viscoelastic infinite memory term \(\smallint_0^\infty g(s)\Delta v(x,t-s)\,ds\) acting only on the displacement equation was considered. It was proved that such a single memory term can indirectly exponentially stabilize the whole strongly coupled system, and the decay rate is independent of the relationships among the wave speeds. Theorem \ref{thmex} of the present paper shows that this exponential stabilization remains valid when both equations are equipped with viscoelastic infinite memories (\(\theta_1=\theta_2=1\)).
	
	(ii) In \cite{zhang2025decay}, a fractional operator of order \(\alpha\in[0,1)\) was introduced into the single memory term, and the system was shown to be polynomially stable with the optimal decay rate \(t^{-\frac{1}{2-2\alpha}}\). Theorem \ref{thmpoly} extends this result to the case of two memory terms with possibly different fractional orders: the decay rate keeps the same form \(t^{-\frac{1}{2-2\theta}}\), but is determined solely by the smaller order \(\theta_0=\min\{\theta_1,\theta_2\}\). The proof of optimality follows the spectral analysis strategy developed therein, yet is considerably more involved since two history variables with different fractional orders are coupled.
	
	(iii) The most interesting phenomenon revealed by the dual-memory model is that the long-time behavior of the system is dominated by the ``weakest'' memory term. Indeed, even if one memory term is fully viscoelastic (for instance, \(\theta_1=1\)), the system still decays only polynomially with rate \(t^{-\frac{1}{2-2\theta_2}}\) whenever the order of the other satisfies \(\theta_2<1\). In other words, exponential stability is recovered if and only if both memory terms degenerate to the classical viscoelastic infinite memories (\(\theta_1=\theta_2=1\)). This is quite different from the single-memory case in \cite{zhang2022stability}, where one viscoelastic memory term already guarantees exponential stability.
\end{remark}

The remainder of this paper is organized as follows. In Section 2, we present the functional setting and prove the well-posedness of system \eqref{eq:original} by the \(C_0\)-semigroup theory. In Section 3, we study the stability of the system with dual memory terms, establish the polynomial stability together with its degeneration to the viscoelastic damping case (\(\theta_1=\theta_2=1\)), and verify the optimality of the polynomial decay rate.

	\section{Well-Posedness}\label{sec2}
First, we introduce the state space $\mathcal{H}$ as follows:
\[
\Hil\triangleq\D(A^{\frac{1}{2}})\times H\times \D(A^{\frac{1}{2}})\times H\times M_{1}\times M_{2},
\]
where  $\D(A^{\frac{1}{2}})$ is the Hilbert space given by \eqref{DAalpha}, and for $i=1,2$,
\begin{align}\label{Mi}
M_i\triangleq\left\{\psi:\mathbb{R}_+\mapsto D(A^{\frac{\theta_i}{2}})\bigg|\|\psi\|_{M_i}^2=\smallint_0^\infty g_i(s)\|A^{\frac{\theta_i}{2}}\psi(s)\|^2ds\right\}.
\end{align}
It is obvious that $M_i$, $i=1,2$, are Hilbert spaces with inner produce $\langle\cdot,\cdot\rangle_{M_i}$ being given by
\begin{align}\label{innerMi}
\langle\psi_1,\psi_2\rangle_{M_i}\triangleq\smallint_0^\infty g_i(s)\left\langle A^{\frac{\theta_i}{2}}\psi_1(s),A^{\frac{\theta_i}{2}}\psi_2(s)\right\rangle ds,~~~\forall \psi_1,\psi_2\in M_i.
\end{align}

The state space $\mathcal{H}$ is a Hilbert space with inner product $\langle\cdot,\cdot\rangle_{\mathcal{H}}$ defined by
\begin{align*}
		\bigl\langle (v,u,p,q,\eta,\theta)^{\top},(\tilde{v},\tilde{u},\tilde{p},\tilde{q},\tilde{\eta},\tilde{\theta})^{\top}\bigr\rangle_{\Hil} =& \alpha_{1}\bigl\langle A^{\frac{1}{2}}v,A^{\frac{1}{2}}\tilde{v}\bigr\rangle
		-\kapp_{1}\bigl\langle A^{\frac{\theta_1}{2}}v,A^{\frac{\theta_1}{2}}\tilde{v}\bigr\rangle
		+ \varrho \langle u,\tilde{u}\rangle\\
		&+\beta \bigl\langle \gamma A^{\frac{1}{2}}v - A^{\frac{1}{2}}p,\gamma A^{\frac{1}{2}}\tilde{v} - A^{\frac{1}{2}}\tilde{p}\bigr\rangle \\
		&-\kapp_{2}\bigl\langle A^{\frac{\theta_2}{2}}p,A^{\frac{\theta_2}{2}}\tilde{p}\bigr\rangle
		+ \mu \langle q,\tilde{q}\rangle
		+\langle \eta,\tilde{\eta}\rangle_{M_{1}} + \langle \theta,\tilde{\theta}\rangle_{M_{2}}.
	\end{align*}
Thus, the norm $\|\cdot\|_{\mathcal{H}}$ introduced by the inner product is given by
\[
		\begin{aligned}
			\|(v,u,p,q,\eta,\theta)^{\top}\|_{\Hil}^{2} &= \alpha_{1}\bigl\|A^{\frac{1}{2}}v\bigr\|^{2} - \kapp_{1}\bigl\|A^{\frac{\theta_1}{2}}v\bigr\|^{2}
			+ \varrho \|u\|^{2} + \beta \bigl\|\gamma A^{\frac{1}{2}}v - A^{\frac{1}{2}}p\bigr\|^{2}\\
			&\quad- \kapp_{2}\bigl\|A^{\frac{\theta_2}{2}}p\bigr\|^{2}  +\mu \|q\|^{2} + \|\eta\|_{M_{1}}^{2} + \|\theta\|_{M_{2}}^{2}.
		\end{aligned}
	\]
Next we show the norm of $\mathcal{H}$ defined above is equivalent to the standard norm of $\mathcal{H}$.
	\begin{theorem}\label{thmequinorm}
		There exists two positive constants $\varrho_1$ and $\varrho_2$ independent of $X=(v,u,p,q,\eta,\theta)\in\mathcal{H}$ such that
		$$\varrho_1\|X\|_{\rm stan}\le \|X\|_{\mathcal{H}}\le\varrho_2\|X\|_{\rm stan},$$
		where
		$$\|X\|_{\rm stan}^2\triangleq\|A^{\frac{1}{2}}v\|^2+\|u\|^2+\|A^{\frac{1}{2}}p\|^2+\|q\|^2+\|\eta\|_{\M{1}}^2+\|\theta\|_{\M{2}}^2.$$
	\end{theorem}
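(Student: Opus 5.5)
The memory components $\|\eta\|_{M_1}^2$ and $\|\theta\|_{M_2}^2$ enter $\|X\|_{\mathcal H}^2$ and $\|X\|_{\rm stan}^2$ with coefficient one. The velocity components enter with the positive weights $\varrho$ and $\mu$. So the whole statement reduces to proving that the quadratic form
\[
Q(v,p)\triangleq \alpha_1\|A^{\frac12}v\|^2-\kapp_1\|A^{\frac{\theta_1}{2}}v\|^2+\beta\|\gamma A^{\frac12}v-A^{\frac12}p\|^2-\kapp_2\|A^{\frac{\theta_2}{2}}p\|^2
\]
is equivalent to $\|A^{\frac12}v\|^2+\|A^{\frac12}p\|^2$ on $\D(A^{\frac12})\times\D(A^{\frac12})$. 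I will diagonalize $Q$ in the eigenbasis $\{e_n\}$ and reduce it to a uniform estimate for a family of $2\times2$ Hermitian matrices.

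\textbf{Upper bound.} This is immediate. I drop the two nonpositive terms $-\kapp_i\|A^{\frac{\theta_i}{2}}\cdot\|^2$ and use $\|\gamma A^{\frac12}v-A^{\frac12}p\|^2\le 2\gamma^2\|A^{\frac12}v\|^2+2\|A^{\frac12}p\|^2$. This gives $Q\le(\alpha_1+2\beta\gamma^2)\|A^{\frac12}v\|^2+2\beta\|A^{\frac12}p\|^2$. Hence $\varrho_2^2=\max\{\alpha_1+2\beta\gamma^2,\,2\beta,\,\varrho,\,\mu,\,1\}$ works.

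\textbf{Lower bound.} Write $a_n=\langle v,e_n\rangle$ and $b_n=\langle p,e_n\rangle$. Using the series representation of $A^{\frac{\alpha}{2}}$ and the identity $\alpha_1+\beta\gamma^2=\alpha$, I get
\[
Q(v,p)=\sum_{n=1}^\infty \xi_n\Big[(\alpha-\kapp_1\xi_n^{\theta_1-1})|a_n|^2-2\beta\gamma\,{\rm Re}(a_n\bar b_n)+(\beta-\kapp_2\xi_n^{\theta_2-1})|b_n|^2\Big].
\]
Since $\theta_i\le1$ and $\xi_n\ge\xi_1>0$, the map $n\mapsto\xi_n^{\theta_i-1}$ is nonincreasing (constant when $\theta_i=1$), so $\xi_n^{\theta_i-1}\le\xi_1^{\theta_i-1}$. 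Put $a_0=\alpha-\kapp_1\xi_1^{\theta_1-1}$ and $b_0=\beta-\kapp_2\xi_1^{\theta_2-1}$. Each bracket is then bounded below by
\[
a_0|a_n|^2-2\beta\gamma\,{\rm Re}(a_n\bar b_n)+b_0|b_n|^2 .
\]
By (A3) we have $a_0>0$, $b_0>0$ and $a_0b_0>\beta^2\gamma^2$. Therefore the real symmetric matrix $\begin{pmatrix}a_0&-\beta\gamma\\-\beta\gamma&b_0\end{pmatrix}$ is positive definite with smallest eigenvalue
\[
\lambda_0=\tfrac12\Big(a_0+b_0-\sqrt{(a_0-b_0)^2+4\beta^2\gamma^2}\Big)>0 .
\]
Applying this to $(a_n,b_n)\in\mathbb C^2$ requires care because of the complex cross term. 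I handle it by splitting into real and imaginary parts, or by noting that a real symmetric positive definite matrix is also Hermitian positive definite. Either way each bracket is at least $\lambda_0(|a_n|^2+|b_n|^2)$. Summing over $n$ gives $Q\ge\lambda_0(\|A^{\frac12}v\|^2+\|A^{\frac12}p\|^2)$, so $\varrho_1^2=\min\{\lambda_0,\varrho,\mu,1\}$.

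The only genuinely delicate point is the lower bound. The $\kapp_i$ terms involve lower-order norms that must be absorbed uniformly in $n$, and this uniformity is exactly the monotonicity of $\xi_n^{\theta_i-1}$ combined with the mixed condition in (A3). Once the problem is reduced mode by mode, everything else is routine.
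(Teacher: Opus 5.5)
Your proof is correct and is essentially the paper's argument. The paper bounds the $\kapp_i$ terms by $\|A^{\frac{\theta_i}{2}}\phi\|^2\le\xi_1^{\theta_i-1}\|A^{\frac12}\phi\|^2$, then handles the cross term globally with Cauchy--Schwarz and Young's inequality, using a weight $\varepsilon$ chosen from $(\alpha-\kapp_1\xi_1^{\theta_1-1})(\beta-\kapp_2\xi_1^{\theta_2-1})>\beta^2\gamma^2$; this is the same positive-definiteness of your $2\times2$ matrix, done at the level of whole norms rather than mode by mode, and the optimal choice of $\varepsilon$ gives exactly your $\lambda_0$.
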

\begin{proof}
By \eqref{asscs} in the Assumption (A3), we can choose a positive constant $\varepsilon$ such that
$$\frac{\gamma\beta}{\beta-\kappa_2\xi_1^{\theta_2-1}}<\varepsilon<\frac{\alpha-\kappa_1\xi_1^{\theta_1-1}}{\gamma\beta}.$$
Then
$$\delta_1\triangleq\alpha-\kappa_1\xi_1^{\theta_1-1}-\gamma\beta\varepsilon>0,~~~\delta_2\triangleq\beta-\kappa_2\xi_1^{\theta_2-1}-\frac{\gamma\beta}{\varepsilon}>0.$$
Moreover, since $\alpha\le 1$, we get
		\begin{align*}
			\|A^{\frac{\alpha}{2}}v\|^2=\sum_{n=1}^\infty\xi_n^\alpha|\langle v,e_n\rangle|^2=\sum_{n=1}^\infty\xi_n^{\alpha-1}\xi_n|\langle v,e_n\rangle|^2\le\xi_1^{\alpha-1}\sum_{n=1}^\infty\xi_n|\langle v,e_n\rangle|^2=\xi_1^{\alpha-1}\|A^{\frac{1}{2}}v\|^2.
		\end{align*}
Applying Young's inequality yields
		\begin{align}\label{coer}
			& \alpha_{1}\bigl\|A^{\frac{1}{2}}v\bigr\|^{2} - \kapp_{1}\bigl\|A^{\frac{\theta_1}{2}}v\bigr\|^{2}+ \beta \bigl\|\gamma A^{\frac{1}{2}}v - A^{\frac{1}{2}}p\bigr\|^{2}- \kapp_{2}\bigl\|A^{\frac{\theta_2}{2}}p\bigr\|^{2} \notag\\
			 \ge & \alpha_{1}\bigl\|A^{\frac{1}{2}}v\bigr\|^{2}-\kappa_1\xi_1^{\theta_1-1}\bigl\|A^{\frac{1}{2}}v\bigr\|^{2}\notag\\
&+\beta\left(\bigl\|A^{\frac{1}{2}}p\bigr\|^{2}-2\gamma\bigl\|A^{\frac{1}{2}}v\bigr\|\bigl\|A^{\frac{1}{2}}p\bigr\|+\gamma^2\bigl\|A^{\frac{1}{2}}v\bigr\|^{2}\right)-\kappa_2\xi_1^{\theta_2-1}\bigl\|A^{\frac{1}{2}}p\bigr\|^{2}\notag\\
\ge&\delta_1\bigl\|A^{\frac{1}{2}}v\bigr\|^{2}+\delta_2\bigl\|A^{\frac{1}{2}}p\bigr\|^{2}.
		\end{align}
Therefore, we get
		\begin{align*}
			\|X\|_{\mathcal{H}}^2\ge\delta_1\bigl\|A^{\frac{1}{2}}v\bigr\|^{2}+ \varrho \|u\|^{2}+\delta_2\bigl\|A^{\frac{1}{2}}p\bigr\|^{2}
			 +\mu \|q\|^{2} + \|\eta\|_{M_{1}}^{2} + \|\theta\|_{M_{2}}^{2}			\ge\varrho_1\|X\|_{\rm stan}^2,
		\end{align*}
where $\varrho_1\triangleq \min\{\delta_1,\varrho,\delta_2,\mu, 1\}$.	
		On the other hand, it is obvious that that $\|X\|_{\mathcal{H}}\le\varrho_2\|X\|_{\rm stan}$ for some positive constant $\varrho_2$ independent of $X$. Then we get the conclusion.
\end{proof}

Next, to transform the non-autonomous system \eqref{eq:original} to an autonomous system we define two new variables
$\eta^{t}(s)$ and $\theta^{t}(s)$ as
	\[
	\eta^{t}(s) = v(t) - v(t - s),\qquad \theta^{t}(s) = p(t) - p(t - s).
	\]
Then, the system \eqref{eq:original} may be reformulated as follows:
	\begin{equation}\label{eq:rewritten}
		\begin{cases}
			\varrho v_{tt} = -\alpha Av(t) + \gamma \beta Ap(t) + \kapp_{1}A^{\theta_{1}}v(t) - \displaystyle\smallint_{0}^{\infty}g_{1}(s)A^{\theta_{1}}\eta^{t}(s)\,ds,& t>0,\\[1.5em]
			\mu p_{tt} = -\beta Ap(t) + \gamma \beta Av(t) + \kapp_{2}A^{\theta_{2}}p(t) - \displaystyle\smallint_{0}^{\infty}g_{2}(s)A^{\theta_{2}}\theta^{t}(s)\,ds,& t>0,\\
v(-t) = h_1(t), ~p(-t)=h_2(t),& t>0, \\
   		v(0)=v_0,\ v_t(0)=v_1,\\
   p(0)=p_0,\ p_t(0)=p_1.
		\end{cases}
	\end{equation}
Therefore, the problem \eqref{eq:rewritten} can be reformulated into the following abstract autonomous evolution equation in $\mathcal{H}$:
\begin{equation}\label{modelmain}
  \begin{cases}
		\displaystyle \frac{d X(t)}{dt} = B X(t), & t>0, \\[0.8em]
		X(0) = X_0\triangleq (v_0,v_1,p_0,p_1,\eta^0,\theta^0)^{\top},
	\end{cases}
\end{equation}
where $X(t)\triangleq\big(v(t),u(t),p(t),q(t),\eta^t(s),\theta^t(s)\big)^\top$ with $u(t)\triangleq v_t(t)$ and $q(t)\triangleq p_t(t)$, $\eta^0\triangleq\eta^0(s)=v_0-h_1(s)$, $\theta^0\triangleq\theta^0(s)=p_0-h_2(s)$, and $B:\D(B)\subset \mathcal{H}\mapsto\mathcal{H}$ is a linear operator given by
\begin{equation}\label{defB}
B\begin{pmatrix} v\\ u\\ p\\ q\\ \eta\\ \theta \end{pmatrix}
		\triangleq \begin{pmatrix}
			u\\[0.4em]
			\frac{1}{\varrho}\bigl(-\alpha Av + \gamma\beta Ap + \kapp_{1}A^{\theta_{1}}v - \smallint_{0}^{\infty}g_{1}(s)A^{\theta_{1}}\eta(s)\,ds\bigr)\\[0.4em]
			q\\[0.4em]
			\frac{1}{\mu}\bigl(-\beta Ap + \gamma\beta Av + \kapp_{2}A^{\theta_{2}}p - \smallint_{0}^{\infty}g_{2}(s)A^{\theta_{2}}\theta(s)\,ds\bigr)\\[0.4em]
			u - \eta_{s}\\
			q - \theta_{s}
		\end{pmatrix}
\end{equation}
where $(v,u,p,q,\eta,\theta)^\top\in \D(B)$ and
\begin{align*}
\dom{B} &\triangleq \{X\in \Hil \ | BX\in\Hil\}\\
&=\left\{ X=\begin{pmatrix} v\\ u\\ p\\ q\\ \eta\\ \theta \end{pmatrix} \ \middle| \
	\begin{aligned}
		&u \in \dom{A^{\frac{1}{2}}},\ v \in \dom{A^{\frac{1}{2}}},\ p \in \dom{A^{\frac{1}{2}}},\ q \in \dom{A^{\frac{1}{2}}},\\
 &\eta \in \M{1},\ \eta_s \in \M{1},\ \theta \in \M{2},\ \theta_s \in \M{2}, \eta(0)=\theta(0)=0,\\
		&- \alpha A v + \gamma \beta A p + \kappa_1 A^{\theta_1} v - \smallint_0^\infty g_1(s) A^{\theta_1} \eta^t(s) ds \in H, \\
		&- \beta A p + \gamma \beta A v + \kappa_2 A^{\theta_2} p - \smallint_0^\infty g_2(s) A^{\theta_2} \theta^t(s) ds \in H
	\end{aligned}
	\right\}.
\end{align*}
	
	\begin{theorem}[Well-posedness]\label{thmwellposednerss}
		Assume that the Assumptions (A1)-(A3) hold. Then the operator $B$ generates a \(\mathcal{C}_0\) semigroup of contractions $\{S(t)\}_{t\ge0}$ on the Hilbert space \({\mathcal{H}}\). Then, for any $X_0\in\mathcal{H}$, problem \eqref{modelmain} admits a unique mild solution
		\[
		X(t)\triangleq S(t)X_0\in C([0,\infty);\mathcal{H}).
		\]
		Moreover, if $X_0\in \D(B)$, then
		\[
		X(t)\in C([0,\infty); \D(B)) \cap C^1([0,\infty); \mathcal{H})
		\]
		is the classical solution of problem \eqref{modelmain}.
	\end{theorem}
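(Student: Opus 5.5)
We apply the Lumer--Phillips theorem. It suffices to show that $B$ is densely defined and dissipative in the inner product $\langle\cdot,\cdot\rangle_{\Hil}$, and that $I-B$ (equivalently $\lambda I-B$ for some $\lambda>0$) is onto $\Hil$. By Theorem \ref{thmequinorm}, $\langle\cdot,\cdot\rangle_{\Hil}$ is an equivalent Hilbert structure on $\Hil$, so contractivity in this norm makes sense. The statements about mild and classical solutions then follow from standard semigroup theory. Density of $\D(B)$ is routine: it contains all tuples with finitely many eigenmode components in $v,u,p,q$ and with $\eta,\theta$ smooth, compactly supported in $(0,\infty)$ and vanishing at $0$, and these are dense in $\Hil$.

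\textbf{Dissipativity.} For $X\in\D(B)$ we compute $\mathrm{Re}\langle BX,X\rangle_{\Hil}$. The terms from $\varrho\langle \cdot,u\rangle$ and $\mu\langle\cdot,q\rangle$ produce
\[
-\alpha\langle A^{\frac12}v,A^{\frac12}u\rangle+\gamma\beta\langle A^{\frac12}p,A^{\frac12}u\rangle+\kappa_1\langle A^{\frac{\theta_1}{2}}v,A^{\frac{\theta_1}{2}}u\rangle-\langle\eta,u\rangle_{M_1}
\]
plus the analogous terms in $q$. These cancel, up to complex conjugation, against the terms coming from $\alpha_1,\beta,\kappa_1,\kappa_2$ applied to $(u,q)$ in the first four slots. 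One checks that $\alpha_1+\beta\gamma^2=\alpha$ and that the cross terms $-\beta\gamma(\langle A^{\frac12}u,A^{\frac12}p\rangle+\langle A^{\frac12}q,A^{\frac12}v\rangle)$ match. The memory parts contribute $\langle u,\eta\rangle_{M_1}-\langle\eta_s,\eta\rangle_{M_1}$, and the $u$ term cancels the $-\langle\eta,u\rangle_{M_1}$ above after taking real parts. Integrating by parts, using $\eta(0)=0$ and $g_1(s)\|A^{\theta_1/2}\eta(s)\|^2\to0$ along a sequence (by (A1) and the fact that $\eta,\eta_s\in M_1$), we obtain
\[
\mathrm{Re}\langle BX,X\rangle_{\Hil}=\tfrac12\smallint_0^\infty g_1'(s)\|A^{\frac{\theta_1}{2}}\eta(s)\|^2ds+\tfrac12\smallint_0^\infty g_2'(s)\|A^{\frac{\theta_2}{2}}\theta(s)\|^2ds\le0
\]
by (A2).

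\textbf{Maximality.} Given $F=(f_1,\dots,f_6)\in\Hil$, we solve $(I-B)X=F$. The first and third equations give $u=v-f_1$ and $q=p-f_3$. The last two are ODEs in $s$ with zero initial data, with explicit solutions
\[
\eta(s)=(1-e^{-s})u+\smallint_0^s e^{\tau-s}f_5(\tau)d\tau,
\]
and similarly for $\theta$. Substituting into the second and fourth equations yields a variational problem for $(v,p)\in \D(A^{\frac12})^2$ of the form $a((v,p),(\phi,\psi))=\ell(\phi,\psi)$. Here the bilinear form $a$ contains $\varrho\langle v,\phi\rangle+\mu\langle p,\psi\rangle$, the stiffness part $\alpha\langle A^{\frac12}v,A^{\frac12}\phi\rangle-\gamma\beta(\dots)+\beta\langle A^{\frac12}p,A^{\frac12}\psi\rangle$, and the memory-induced terms $-\kappa_1\langle A^{\frac{\theta_1}{2}}v,A^{\frac{\theta_1}{2}}\phi\rangle+\tilde\kappa_1\langle A^{\frac{\theta_1}{2}}v,A^{\frac{\theta_1}{2}}\phi\rangle$ with $\tilde\kappa_1=\smallint_0^\infty g_1(1-e^{-s})ds<\kappa_1$, and likewise for $p$.

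\textbf{The main obstacle} is coercivity of $a$, because the memory-induced terms carry the net negative coefficient $-(\kappa_1-\tilde\kappa_1)$. We handle this exactly as in \eqref{coer}. We bound $(\kappa_1-\tilde\kappa_1)\|A^{\frac{\theta_1}{2}}v\|^2\le\kappa_1\xi_1^{\theta_1-1}\|A^{\frac12}v\|^2$, write the stiffness part as $\alpha_1\|A^{\frac12}v\|^2+\beta\|\gamma A^{\frac12}v-A^{\frac12}p\|^2$, and apply the Young-inequality argument from the proof of Theorem \ref{thmequinorm} under \eqref{asscs}. This gives $a((v,p),(v,p))\ge\delta_1\|A^{\frac12}v\|^2+\delta_2\|A^{\frac12}p\|^2$. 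Continuity of $a$ and $\ell$ is straightforward; for $\ell$ one uses Cauchy--Schwarz in $M_i$ together with $g_i\in L^1$. Lax--Milgram then gives $(v,p)$. We next recover $u,q\in\D(A^{\frac12})$ and the required regularity of the combinations $-\alpha Av+\dots\in H$ from the equations themselves. Finally, we verify $\eta,\eta_s\in M_1$, which requires showing $\smallint_0^\infty g_1(s)\|A^{\frac{\theta_1}{2}}\smallint_0^se^{\tau-s}f_5d\tau\|^2ds<\infty$. Using $g_1'\le -c_1 g_1$, so that $g_1(s)\le g_1(\tau)e^{-c_1(s-\tau)}$, this reduces via Cauchy--Schwarz and Fubini to a multiple of $\|f_5\|_{M_1}^2$; the same argument applies to $\theta$. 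Hence $X\in\D(B)$, $I-B$ is surjective, and Lumer--Phillips concludes the proof.
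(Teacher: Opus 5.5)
Your proof is correct, but it checks the range condition at $\lambda=1$ instead of proving $0\in\rho(B)$ as the paper does, and this changes the technical core.

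\emph{The paper's route.} Solving $BX=F$ gives $u=f_1$ and $\eta(s)=sf_1-\smallint_0^s f_5\,d\tau$, which grows linearly in $s$. To show $\eta\in M_1$, the paper must invoke the exponential-type bound $g_1'\le -c_1g_1$ and a quadratic-inequality argument yielding $\|\eta\|_{M_1}\le\frac{2}{c_1}\|\eta_s\|_{M_1}$. It then proves the a priori bound $\|X\|_{\Hil}\le\mathcal{M}\|F\|_{\Hil}$ separately.

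\emph{Your route.} The history equation $\eta_s+\eta=u+f_5$ has the damped kernel $e^{\tau-s}$. So the $M_1$ bound follows from Cauchy--Schwarz and Fubini using only that $g_1$ is nonincreasing; your appeal to $g_1'\le -c_1g_1$ is more than you need. The bound $\|(I-B)^{-1}\|\le 1$ is automatic from dissipativity. The cost is a modified variational form: it gains the terms $\varrho\|v\|^2+\mu\|p\|^2$, and the memory coefficients drop from $\kappa_i$ to $\kappa_i-\tilde\kappa_i$. Both changes only improve the coercivity inherited from \eqref{coer}. Your route also sidesteps the paper's reuse of the sequence $\{s_n\}$ from \eqref{eq:Re1}, which was chosen assuming $\eta\in M_1$.

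\emph{What the paper's route buys.} It gives $0\in\rho(B)$ itself. That fact is used later to start the contradiction argument in Lemma \ref{lemthm1-1}, so with your approach it would need a separate argument.

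Finally, your density argument is optional. In a Hilbert space, a dissipative operator with $R(I-B)=\Hil$ is automatically densely defined.
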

\begin{proof}By Classical L\"umer-Phillips theorem, to complete the proof, we only need to show $B$ is dissipative and $0\in\rho(B)$, where $\rho(B)$ denotes the resolvent set of $B$. For any \(X = (v,u,p,q,\eta,\theta)^{\top}\in \D(B)\), we have
	\begin{align}
		\mathrm{Re}\bigl\langle BX,X\bigr\rangle_{\Hil}
		&=\mathrm{Re}\Bigl\{ \alpha_{1}\bigl\langle A^{\frac{1}{2}}u,A^{\frac{1}{2}}v\bigr\rangle
		-\kapp_{1}\bigl\langle A^{\frac{\theta_1}{2}}u,A^{\frac{\theta_1}{2}}v\bigr\rangle \notag\\
		&\quad +\Bigl\langle -\alpha Av + \gamma\beta Ap + \kapp_{1}A^{\theta_{1}}v
		- \smallint_{0}^{\infty}g_{1}(s)A^{\theta_{1}}\eta^{t}(s)\,ds,\; u\Bigr\rangle \notag\\
		&\quad +\beta \bigl\langle \gamma A^{\frac{1}{2}}u - A^{\frac{1}{2}}q,\;
		\gamma A^{\frac{1}{2}}v - A^{\frac{1}{2}}p\bigr\rangle
		-\kapp_{2}\bigl\langle A^{\frac{\theta_2}{2}}q,A^{\frac{\theta_2}{2}}p\bigr\rangle \notag\\
		&\quad +\Bigl\langle -\beta Ap + \gamma\beta Av + \kapp_{2}A^{\theta_{2}}p
		- \smallint_{0}^{\infty}g_{2}(s)A^{\theta_{2}}\theta^{t}(s)\,ds,\; q\Bigr\rangle \notag\\
		&\quad +\bigl\langle u - \eta_{s},\,\eta\bigr\rangle_{M_{1}}
		+\bigl\langle q - \theta_{s},\,\theta\bigr\rangle_{M_{2}} \Bigr\}\notag\\
		&= \mathrm{Re}\Bigl\{-\smallint_{0}^{\infty}g_{1}(s)\bigl\langle A^{\frac{\theta_1}{2}}\eta_{s},
		A^{\frac{\theta_1}{2}}\eta\bigr\rangle\,ds
		- \smallint_{0}^{\infty}g_{2}(s)\bigl\langle A^{\frac{\theta_2}{2}}\theta_{s},
		A^{\frac{\theta_2}{2}}\theta\bigr\rangle\,ds\Bigr\}.\notag
	\end{align}
Since $\eta\in M_1$, $g_1(s)\|A^{\frac{\theta_1}{2}}\eta(s)\|^2$ is nonnegative and integrable on $\mathbb{R}_+$, then there exists a positive and increasing sequence $\{s_n\}$ such that $s_n\to\infty$ and $g_1(s_n)\|A^{\frac{\theta_1}{2}}\eta(s_n)\|^2\to0$ as $n\to\infty$. Then by $\eta(0)=0$, we get
	\begin{align}\label{eq:Re1}
	\mathrm{Re}\smallint_{0}^{\infty}g_{1}(s)\bigl\langle A^{\frac{\theta_1}{2}}\eta_{s},A^{\frac{\theta_1}{2}}\eta\bigr\rangle\,ds=&\frac12\lim_{n\to\infty}\smallint_0^{s_n}g_1(s)\frac{d}{ds}\bigl\|A^{\frac{\theta_1}{2}}\eta\bigr\|^{2}\,ds\notag\\
=&\frac12\lim_{n\to\infty}\left( g_1(s_n)\bigl\|A^{\frac{\theta_1}{2}}\eta(s_n)\bigr\|^2-\smallint_0^{s_n}g_{1}'(s)\bigl\|A^{\frac{\theta_1}{2}}\eta\bigr\|^{2}\,ds\right)\notag\\
	= &-\frac{1}{2}\smallint_{0}^{\infty}g_{1}'(s)\bigl\|A^{\frac{\theta_1}{2}}\eta\bigr\|^{2}\,ds,
	\end{align}
	and similarly for \(\theta\). Hence
\begin{equation}\label{disB}
  \mathrm{Re}\bigl\langle BX,X\bigr\rangle_{\Hil}
	= \frac{1}{2}\smallint_{0}^{\infty}g_{1}'(s)\bigl\|A^{\frac{\theta_1}{2}}\eta\bigr\|^{2}\,ds
	+ \frac{1}{2}\smallint_{0}^{\infty}g_{2}'(s)\bigl\|A^{\frac{\theta_2}{2}}\theta\bigr\|^{2}\,ds \leq 0.
\end{equation}
   Therefore, the operator \(B\) is dissipative.
	Moreover, it remains to show that $0\in\rho(B)$.
	For any \(F = (f_{1},f_{2},f_{3},f_{4},f_{5},f_{6})\in \Hil\), we solve \(BX = F\),\(X\in \D(B)\). This yields the system
	\begin{align}
		u = f_{1},\label{eq:Th1.1}\\
		\displaystyle -\alpha Av + \gamma\beta Ap + \kapp_{1}A^{\theta_{1}}v
		- \smallint_{0}^{\infty}g_{1}(s)A^{\theta_{1}}\eta\,ds= \varrho f_{2},\label{eq:Th1.2}\\
		q = f_{3},\label{eq:Th1.3}\\
		\displaystyle -\beta Ap + \gamma\beta Av + \kapp_{2}A^{\theta_{2}}p-\smallint_{0}^{\infty}g_{2}(s)A^{\theta_{2}}\theta\,ds= \mu f_{4},\label{eq:Th1.4}\\
		u - \eta_{s} = f_{5},\label{eq:Th1.5}\\
		q - \theta_{s} = f_{6}.\label{eq:Th1.6}
	\end{align}
From \eqref{eq:Th1.1} and the condition $\eta(0)=0$, one can solve the ODE \eqref{eq:Th1.5} to obtain
    \begin{equation}\label{eq:Th1.7}
	\eta(s) = \smallint_{0}^{s} f_{1}\,d\tau - \smallint_{0}^{s} f_{5}\,d\tau = s f_{1} - \smallint_{0}^{s} f_{5}\,d\tau.
    \end{equation}

We now verify that $\eta\in M_{1}$ and $\eta_{s}\in M_{1}$. Since $f_1\in \mathcal{D}(A^{\frac{1}{2}})$ and the embedding $\mathcal{D}(A^{\frac{1}{2}})\hookrightarrow \mathcal{D}(A^{\frac{\theta_1}{2}})$ is continuous, we readily have $f_1\in M_1$. Moreover, owing to $f_5\in M_1$ and the relation $\eta_{s}=f_{1}-f_{5}$, it follows that $\eta_s\in M_1$ as well.
We further estimate the $M_1$-norm of $\eta$. One has
$$\bigl\|A^{\frac{\theta_1}{2}}\eta\|^2\le 2s^2\bigl\|A^{\frac{\theta_1}{2}}f_1\|^2+2s\smallint_0^s\bigl\|A^{\frac{\theta_1}{2}}f_5\|^2d\tau.$$
Since the kernel function $g_1(s)$ is non-increasing, we deduce
\begin{align}\label{youxian}
\smallint_0^{s_n} g_1(s)\bigl\|A^{\frac{\theta_1}{2}}\eta\|^2\,ds\le2s_n^2\kappa_1\bigl\|A^{\frac{\theta_1}{2}}f_1\|^2+2s_n^2\|f_5\|_{M_1}^2<\infty,
\end{align}
where $\{s_n\}_{n\ge1}$ is the sequence introduced in the proof of \eqref{eq:Re1}. By virtue of Assumption (A2) and estimate \eqref{eq:Re1}, we obtain
	\begin{align*}
		\smallint_{0}^{s_n} g_{1}(s)\bigl\|A^{\frac{\theta_1}{2}}\eta\bigr\|^{2}\,ds &\le -\frac{1}{c_{1}}\smallint_{0}^{s_n} g_{1}'(s)\bigl\|A^{\frac{\theta_1}{2}}\eta\bigr\|^{2}\,ds \\
		&= \frac{2}{c_{1}}\,\mathrm{Re}\smallint_{0}^{s_n} g_{1}(s)\bigl\langle A^{\frac{\theta_1}{2}}\eta_{s},A^{\frac{\theta_1}{2}}\eta\bigr\rangle\,ds -g_1(s_n)\bigl\|A^{\frac{\theta_1}{2}}\eta(s_n)\bigr\|^{2}\\
		&\le \frac{2}{c_{1}} \|\eta_{s}\|_{M_{1}} \left(\smallint_{0}^{s_n} g_{1}(s)\bigl\|A^{\frac{\theta_1}{2}}\eta\bigr\|^{2}\,ds\right)^{\frac{1}{2}}-g_1(s_n)\bigl\|A^{\frac{\theta_1}{2}}\eta(s_n)\bigr\|^{2}.
	\end{align*}
Define
\begin{align*}
\mathfrak{A}\triangleq\left(\smallint_{0}^{s_n} g_{1}(s)\bigl\|A^{\frac{\theta_1}{2}}\eta\bigr\|^{2}\,ds\right)^{\frac{1}{2}},~ \mathfrak{B}\triangleq \frac{2}{c_{1}} \|\eta_{s}\|_{M_{1}},~\mathfrak{C}\triangleq g_1(s_n)\bigl\|A^{\frac{\theta_1}{2}}\eta(s_n)\bigr\|^{2}.
\end{align*}
It is clear that $\mathfrak{B}<\infty$. In view of the construction of $\{s_n\}$, we have $\lim_{n\to\infty}\mathfrak{C}=0$, and \eqref{youxian} guarantees $\mathfrak{A}<\infty$. Consequently,
$$\mathfrak{A}^2-\mathfrak{B}\mathfrak{A}+\mathfrak{C}\le0,$$
which yields
$$\mathfrak{A}\le\frac{\mathfrak{B}+\sqrt{\mathfrak{B}^2-4\mathfrak{C}}}{2}.$$
Taking the limit superior as $n\to\infty$, we arrive at
$$\limsup_{n\to\infty}\mathfrak{A}\le \mathfrak{B}.$$
Since $\mathfrak{A}$ is monotonically increasing with respect to $n$, the limit $\lim_{n\to\infty}\mathfrak{A}$ exists and satisfies
\begin{equation}\label{M1bound}
  \|\eta\|_{M_1}=\lim_{n\to\infty}\mathfrak{A}=\limsup_{n\to\infty}\mathfrak{A}\le \mathfrak{B}=\frac{2}{c_{1}} \|\eta_{s}\|_{M_{1}}<\infty,
\end{equation}
which proves $\eta\in M_1$.
Similarly, we derive the corresponding representation for $\theta(s)$:
	 \begin{equation}
		\theta(s) = s f_{3} - \smallint_{0}^{s} f_{6}\,d\tau, \label{eq:Th1.8}
	\end{equation}
and the same argument implies $\theta\in M_{2}$ and $\theta_{s}\in M_{2}$.
	
Using $\alpha_{1}=\alpha-\gamma^2 \beta$, we get from \eqref{eq:Th1.2} and \eqref{eq:Th1.4} that
	\begin{align}
		\alpha_{1}Av - \kapp_{1}A^{\theta_{1}}v + \beta\gamma(\gamma Av - Ap)
		&= -\varrho f_{2} - \smallint_{0}^{\infty} g_{1}(s)A^{\theta_{1}}\eta\,ds,\label{eq:Th1.9}\\
		\beta Ap - \gamma\beta Av - \kapp_{2}A^{\theta_{2}}p
		&= - \mu f_{4} -\smallint_{0}^{\infty} g_{2}(s)A^{\theta_{2}}\theta\,ds.\label{eq:Th1.10}
	\end{align}
Let $V\triangleq\D(A^{\frac{1}{2}})\times \D(A^{\frac{1}{2}})$ be a Banach space with standard norm
$$\|(v,p)\|_{V}=\left(\|A^{\frac{1}{2}}v\|^2+\|A^{\frac{1}{2}}p\|^2\right)^{\frac{1}{2}},~~~\forall (v,p)\in V.$$
A pair of functions
$(v,p)\in V$ is called a weak solution of \eqref{eq:Th1.9} and \eqref{eq:Th1.10}, if
$$\mathcal{B}((v,p),(\varphi,\psi)) = \mathcal{L}(\varphi,\psi),$$
where
\begin{align*}
 \mathcal{B}((v,p),(\varphi,\psi)) \triangleq&\alpha_1 \left\langle A^{\frac{1}{2}}v, A^{\frac{1}{2}}\varphi \right\rangle - \kappa_1 \left\langle A^{\frac{\theta_1}{2}}v, A^{\frac{\theta_1}{2}}\varphi \right\rangle \\
			&+ \beta \left\langle \gamma A^{\frac{1}{2}}v - A^{\frac{1}{2}}p, \gamma A^{\frac{1}{2}}\varphi - A^{\frac{1}{2}}\psi \right\rangle - \kappa_2 \left\langle A^{\frac{\theta_2}{2}}p, A^{\frac{\theta_2}{2}}\psi \right\rangle,\\
\mathcal{L}(\varphi,\psi)\triangleq&-\varrho\langle f_2,\varphi\rangle-\mu\langle f_4,\psi\rangle\\
&-\smallint_0^\infty g_1(s)\left\langle A^{\frac{\theta_1}{2}}\eta,A^{\frac{\theta_1}{2}}\varphi\right\rangle ds-\smallint_0^\infty g_2(s)\left\langle A^{\frac{\theta_2}{2}}\theta,A^{\frac{\theta_2}{2}}\psi\right\rangle ds.
\end{align*}
By \eqref{coer}, we get
\begin{align*}
\mathcal{B}((v,p),(v,p))&=\alpha_{1}\bigl\|A^{\frac{1}{2}}v\bigr\|^{2} - \kapp_{1}\bigl\|A^{\frac{\theta_1}{2}}v\bigr\|^{2}+ \beta \bigl\|\gamma A^{\frac{1}{2}}v - A^{\frac{1}{2}}p\bigr\|^{2}- \kapp_{2}\bigl\|A^{\frac{\theta_2}{2}}p\bigr\|^{2}\\
&\ge\min{\delta_1,\delta_2}\|(v,p)\|_V^2,
\end{align*}
so $\mathcal{B}$ is coercive. Since $\D(A^{\frac{1}{2}}) \hookrightarrow \D(A^{\frac{\theta_i}{2}}) \hookrightarrow H$ continuously for $i=1,2$, it is obvious that there exist two positive constant $\mathcal{M}_1$ and $\mathcal{M}_2$ such that
	\begin{align}\label{boundvb}
		|\mathcal{B}((v,p),(\varphi,\psi))|& \le \mathcal{M}_1\|(v,p)\|_{V}\,\|(\varphi,\psi)\|_{V},\notag\\
|\mathcal{L}(\varphi,\psi)|&\le\mathcal{M}_2\left(\|F\|_{\mathcal{H}}+\|\eta\|_{M_1}+\|\theta\|_{M_2}\right)\|(\varphi,\psi)\|_{V}.
	\end{align}
	By the Lax--Milgram theorem, there exists a unique \((v,p)\in V\) solving
	\(\mathcal{B}((v,p),(\varphi,\psi)) = \mathcal{L}(\varphi,\psi)\), i.e., \eqref{eq:Th1.9} and \eqref{eq:Th1.10} admits a weak solution $(v,p)\in V$. Moreover, Since we have proved $\eta,\eta_s\in M_1$, $\theta,\theta_s\in M_2$, in view of \eqref{eq:Th1.1}-\eqref{eq:Th1.4}, we get there exists $(v,u,p,q,\eta,\theta)^\top \in \D(B)$ such that \(
		B(v,u,p,q,\eta,\theta)^\top = F,
		\)  for any $F\in\mathcal{H}$, i.e., the range of $\mathcal{A}$ is the whole $\mathcal{H}$.
	
So to show $0\in\rho(B)$, we only need to show there exists $\mathcal{M}>0$ such that
		\begin{equation}\label{yjgj}
			\|(v,u,p,q,\eta,\theta)^\top\|_{\mathcal{H}} \leq \mathcal{M}\|F\|_{\mathcal{H}},
		\end{equation}where $(v,u,p,q,\eta,\theta)^\top$ is the solution of \eqref{eq:Th1.1}-\eqref{eq:Th1.6} with $F = (f_1, f_2, f_3, f_4, f_5,f_6)^\top \in \mathcal{H}$.

 By \eqref{eq:Th1.1} and \eqref{eq:Th1.3}, it is obvious that $\varrho\|u\|^{2}\le \mathcal{M}\|F\|^2$ and $ \mu\|q\|^{2} \le \mathcal{M}\|F\|^2$.

 Since $\eta_{s}=f_{1}-f_{5}$, it follows from \eqref{M1bound} that
 \begin{align*}
   \|\eta\|_{M_{1}}\le\frac{2}{c_1}\|\eta_s\|_{M_1}\le\frac{2}{c_1}\left(\|f_1\|_{M_1}+\|f_5\|_{M_1}\right)\le \mathcal{M}\left(\|A^{\frac{1}{2}}f_{1}\|+\|f_5\|_{M_1}\right)\le\mathcal{M}\|F\|_{\mathcal{H}}.
 \end{align*}
Similarly, \(\|\theta\|_{M_{2}}\le C\|F\|\).

Moreover, we get from \eqref{boundvb} that, for any $\varepsilon>0$,
\begin{align*}
&\alpha_{1}\|A^{\frac{1}{2}}v\|^{2} - \kapp_{1}\|A^{\frac{\theta_1}{2}}v\|^{2}
	+ \beta\|\gamma A^{\frac{1}{2}}v - A^{\frac{1}{2}}p\|^{2}
	- \kapp_{2}\|A^{\frac{\theta_2}{2}}p\|^{2}\\
&=\mathcal{B}((v,p),(v,p))=\mathcal{L}(v,p)\\
&\le\mathcal{M}\left(\|F\|_{\mathcal{H}}+\|\eta\|_{M_1}+\|\theta\|_{M_2}\right)\|(\varphi,\psi)\|_{V}\\
&\le\mathcal{M}\left(\|F\|_{\mathcal{H}}+\|\eta\|_{M_1}+\|\theta\|_{M_2}\right)\left(\|A^{\frac{1}{2}}v\|^2+\|A^{\frac{1}{2}}p\|^2\right)^{\frac{1}{2}}\\
&\le\varepsilon\left(\|A^{\frac{1}{2}}v\|^2+\|A^{\frac{1}{2}}p\|^2\right)+\mathcal{M}(\varepsilon)\left(\|F\|_{\mathcal{H}}+\|\eta\|_{M_1}+\|\theta\|_{M_2}\right)^2.
\end{align*}
Choosing $\varepsilon=\frac12\min\{\delta_1,\delta_2\}$ with two positive constants $\delta_1$ and $\delta_2$ being given in the proof of \eqref{coer}, by \eqref{coer}, we get
\[\varepsilon\left(\|A^{\frac{1}{2}}v\|^2+\|A^{\frac{1}{2}}p\|^2\right)\le\frac12\left(\alpha_{1}\|A^{\frac{1}{2}}v\|^{2} - \kapp_{1}\|A^{\frac{\theta_1}{2}}v\|^{2}
	+ \beta\|\gamma A^{\frac{1}{2}}v - A^{\frac{1}{2}}p\|^{2}
	- \kapp_{2}\|A^{\frac{\theta_2}{2}}p\|^{2}\right).\]
Then the above two inequalities and $\|\eta\|_{M_{1}}, \|\theta\|_{M_{2}}\le \mathcal{M}\|F\|$
\[\alpha_{1}\|A^{\frac{1}{2}}v\|^{2} - \kapp_{1}\|A^{\frac{\theta_1}{2}}v\|^{2}
	+ \beta\|\gamma A^{\frac{1}{2}}v - A^{\frac{1}{2}}p\|^{2}
	- \kapp_{2}\|A^{\frac{\theta_2}{2}}p\|^{2}\le\mathcal{M}\|F\|_{\mathcal{H}}^2.\]	
In view of the above analysis, we get \eqref{yjgj}, and the proof is complete.
\end{proof}
	
	\section{Stability analysis}
In this section, we investigate the stability of the contraction $\mathcal{C}_0$-semigroup $\{S(t)\}_{t\ge0}$ constructed in Theorem \ref{thmwellposednerss}. Our analysis relies on the two classical results stated below. Throughout both theorems, we suppose that $\{S(t)\}_{t\ge0}$ is a contraction $\mathcal{C}_0$-semigroup on a Hilbert space $(\mathcal{H},\|\cdot\|)$ with infinitesimal generator $B$.

The following characterization for polynomial stability of contraction $\mathcal{C}_0$-semigroups originates from Borichev and Tomilov \cite{borichev2010optimal}.
\begin{theorem}\label{thmBT}
Assume $i\mathbb{R}\subset \rho(B)$ with $\rho(B)$ being the resolvent set of $B$. Then the estimate
\[
\|S(t)\varphi\| \le \mathcal{M}\|B\varphi\| \,t^{-\frac1\omega},\qquad \forall \varphi\in \mathcal{D}(B),\; t\ge1,
\]
holds for some constant $\mathcal{M}>0$ and exponent $\omega>0$ if and only if
\begin{align}\label{eq:lemma1}
\sup_{\lambda\in\mathbb{R}} |\lambda|^{-\omega} \left\| (i\lambda I - B)^{-1} \right\|_{\mathcal{L}(\mathcal{H})} < \infty,
\end{align}
where $\left\| (i\lambda I - B)^{-1} \right\|_{\mathcal{L}(\mathcal{H})}$ denotes the norm of the operator $(i\lambda I - B)^{-1}$.
\end{theorem}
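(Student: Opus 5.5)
The plan is to prove the two directions separately. Throughout, write $R(z)\triangleq(zI-B)^{-1}$, and note that $i\mathbb{R}\subset\rho(B)$ gives in particular $0\in\rho(B)$. The decay estimate is therefore equivalent to the operator bound $\|S(t)B^{-1}\|_{\mathcal{L}(\mathcal{H})}\le \mathcal{M}t^{-1/\omega}$ for $t\ge1$. Since $R$ is continuous on $i\mathbb{R}$, only large $|\lambda|$ matter in \eqref{eq:lemma1}.

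\emph{Necessity.} I expect this direction to be elementary. For $\lambda\in\mathbb{R}$ and $\tau\ge1$, integrating $\frac{d}{dt}\bigl(e^{-i\lambda t}S(t)\bigr)$ gives
\[
R(i\lambda)=\smallint_0^\tau e^{-i\lambda t}S(t)\,dt+e^{-i\lambda\tau}S(\tau)R(i\lambda).
\]
Write $S(\tau)R(i\lambda)=S(\tau)B^{-1}\,BR(i\lambda)$ and use $BR(i\lambda)=i\lambda R(i\lambda)-I$. Contractivity together with the assumed bound then yields
\[
\|R(i\lambda)\|\le \tau+\mathcal{M}\tau^{-1/\omega}\bigl(|\lambda|\,\|R(i\lambda)\|+1\bigr).
\]
Choosing $\tau=(2\mathcal{M}|\lambda|)^{\omega}$ for $|\lambda|$ large absorbs the resolvent term on the left. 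This gives $\|R(i\lambda)\|\le C|\lambda|^{\omega}$, which is \eqref{eq:lemma1}.

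\emph{Sufficiency.} I will follow Borichev--Tomilov and organize the argument in three steps.

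First, combining \eqref{eq:lemma1} with the resolvent identity $R(i\lambda)B^{-1}=\bigl(R(i\lambda)-(i\lambda)^{-1}\cdot\text{const}\bigr)$-type manipulations and the moment (interpolation) inequality for the sectorial operator $-B$, I will show that $\sup_{\lambda\in\mathbb{R}}\|R(i\lambda)(-B)^{-\omega}\|<\infty$. Here the fractional power $(-B)^{-\omega}$ is defined via the Phillips/Balakrishnan calculus.

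Second, I will extend this bound to the closed right half-plane by a Neumann series near $i\mathbb{R}$ and the Hille--Yosida bound $\|R(z)\|\le(\mathrm{Re}\,z)^{-1}$ away from it. This gives $\sup_{\mathrm{Re}z>0}\|R(z)(-B)^{-\omega}\|<\infty$.

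Third, I will use the Hilbert space structure through Plancherel's theorem. Since $\mathcal{H}$ is Hilbert, $e^{-t}S(t)(-B)^{-\omega}x$ has Fourier transform $R(1+i\lambda)(-B)^{-\omega}x$. I will write
\[
\langle tS(t)x,y\rangle=\smallint_0^t\langle S(s)x,S(t-s)^*y\rangle\,ds
\]
and apply Cauchy--Schwarz. Combined with Plancherel estimates for the semigroup and its adjoint, this should yield $\|S(t)(-B)^{-\omega}\|\le C/t$. Finally, the moment inequality converts this into $\|S(t)(-B)^{-1}\|\le Ct^{-1/\omega}$. The conversion uses the semigroup law, $S(t)(-B)^{-\omega}=\bigl(S(t/n)(-B)^{-\omega/n}\bigr)^n$ in the spirit of the Batty--Duyckaerts reduction.

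The main obstacle is the third step. A bounded resolvent on the half-plane only yields $L^2$-in-time information, and turning this into a pointwise rate $t^{-1}$ with no logarithmic loss is precisely where the Hilbert space geometry is essential. I would first try the splitting above, where $\smallint_0^t$ is estimated by $L^2$ norms of $s\mapsto S(s)(-B)^{-\omega/2}x$ and of the adjoint orbit. If the exponent bookkeeping with $\omega/2$ does not close, I would fall back to Borichev--Tomilov's original argument. That argument works with the semigroup generated by a suitably perturbed operator $B-\varepsilon(-B)^{-\omega}$, applies the Gearhart--Pr\"uss theorem to it, and then compares it with $S(t)$.
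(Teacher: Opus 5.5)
The paper does not prove this statement; it quotes Borichev--Tomilov, so the question is whether your argument stands on its own. Your necessity direction is correct and complete, and Steps 1--2 are sound in substance. In Step 2, though, the Neumann series plus $\|R(z)\|\le(\mathrm{Re}\,z)^{-1}$ leaves the strip $|\lambda|^{-\omega}\lesssim\mathrm{Re}\,z\lesssim 1$ uncontrolled. Use instead the exact identity $R(\epsilon+i\lambda)(-B)^{-\omega}=(I-\epsilon R(\epsilon+i\lambda))R(i\lambda)(-B)^{-\omega}$, where $\|I-\epsilon R(\epsilon+i\lambda)\|\le 2$.

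The genuine gap is Step 3. With $tS(t)=\int_0^t S(s)S(t-s)\,ds$ and Cauchy--Schwarz, you must split $(-B)^{-\omega}$ as $(-B)^{-\omega/2}$ on each side. You then need $\sup_{\|x\|\le1}\int_0^\infty\|S(s)(-B)^{-\omega/2}x\|^2ds<\infty$, and the adjoint analogue. By Plancherel this reads $\int_{\R}\|R(ir)(-B)^{-\omega/2}x\|^2dr\le C\|x\|^2$, and nothing in Steps 1--2 gives it. The only pointwise information is $\|R(ir)(-B)^{-\omega/2}\|=O(|r|^{\omega/2})$, which is not square integrable, and for non-normal $B$ there is no orthogonality to exploit. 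What you can prove, via $R(ir)=R(1+ir)+R(ir)R(1+ir)$ as in Gearhart--Pr\"uss, is the $L^2$ bound for $S(\cdot)(-B)^{-\omega}x$. Putting the full power on both sides then closes, but it only yields $\|S(t)(-B)^{-2\omega}\|=O(t^{-1})$, i.e.\ the rate $t^{-1/(2\omega)}$. That is exactly the factor-$2$ loss you feared.

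Your fallback does not help either. Let $M$ be the bound from Step 1. Since $ir-B+\varepsilon(-B)^{-\omega}=(ir-B)\bigl(I+\varepsilon R(ir)(-B)^{-\omega}\bigr)$ with $\|\varepsilon R(ir)(-B)^{-\omega}\|\le\varepsilon M$, for $\varepsilon M<1$ the perturbed resolvent on $i\R$ is comparable to $R(ir)$. So Gearhart--Pr\"uss gives nothing.

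The missing idea is a third resolvent factor. The function $\tfrac{t^2}{2}S(t)$ has Laplace transform $R(\lambda)^3$, and $(-B)^{-\omega}$ commutes with $R(\lambda)$. Hence for $x,y\in\Hil$ and $\epsilon>0$,
\[
\tfrac{t^2}{2}\big\langle S(t)(-B)^{-\omega}x,y\big\rangle=\frac{e^{\epsilon t}}{2\pi}\int_{\R}e^{irt}\big\langle R(\epsilon+ir)(-B)^{-\omega}R(\epsilon+ir)x,\,R(\epsilon+ir)^{*}y\big\rangle\,dr .
\]
The middle factor is bounded uniformly by Step 2. The two outer factors are controlled in $L^2(dr)$ by Plancherel: $\int_{\R}\|R(\epsilon+ir)x\|^2dr=2\pi\int_0^\infty e^{-2\epsilon s}\|S(s)x\|^2ds\le\pi\epsilon^{-1}\|x\|^2$, and likewise for the adjoint contraction semigroup. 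Choosing $\epsilon=1/t$ gives $\|S(t)(-B)^{-\omega}\|\le C/t$ with no loss. In time-domain terms, this is a triple convolution whose middle factor acts as a bounded $L^2$ Fourier multiplier; that slot is what the two-fold convolution lacks.

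Two smaller points. First, your final conversion identity runs the wrong way. For $\omega\ge1$, apply the moment inequality directly to $S(t)x$. For $\omega<1$, first use $S(nt)(-B)^{-n\omega}=\bigl(S(t)(-B)^{-\omega}\bigr)^n$ with $n\omega\ge1$, then interpolate. Second, \eqref{eq:lemma1} must be read over $|\lambda|\ge1$. As written, the weight $|\lambda|^{-\omega}$ is infinite at $\lambda=0$, which continuity of $R$ does not repair.
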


The subsequent criterion for exponential stability of contraction $\mathcal{C}_0$-semigroups is established by Pr\"us \cite{pruss1984spectrum}.
\begin{theorem}\label{thmG}
The semigroup $\{S(t)\}_{t\ge0}$ is exponentially stable if and only if
\(
i\mathbb{R}\subset \rho(B)
\)
and
\begin{align}\label{eq:lemma2}
\sup_{\lambda\in\mathbb{R}} \left\| (i\lambda I - B)^{-1} \right\|_{\mathcal{L}(\mathcal{H})} < \infty.
\end{align}
\end{theorem}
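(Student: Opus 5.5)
Since this is the classical Gearhart--Pr\"uss theorem for contraction semigroups on a Hilbert space, I would prove both directions directly, using the Laplace-transform representation of the resolvent and Plancherel's theorem. Throughout, $R(\lambda)\triangleq(\lambda I-B)^{-1}$.

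\emph{Necessity.} Suppose $\|S(t)\|\le \mathcal{M}e^{-\omega t}$ for some $\omega>0$. Then for $\mathrm{Re}\,\lambda>-\omega$ the integral
\[
\smallint_0^\infty e^{-\lambda t}S(t)x\,dt
\]
converges absolutely. The standard argument for generators shows it equals $R(\lambda)x$. Hence $i\mathbb{R}\subset\rho(B)$ and $\|R(i\lambda)\|\le \mathcal{M}/\omega$ for all $\lambda\in\mathbb{R}$, which is \eqref{eq:lemma2}.

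\emph{Sufficiency, step 1: a uniform bound on the closed right half-plane.} Let $K\triangleq\sup_{s\in\mathbb{R}}\|R(is)\|<\infty$. For $0<a<1/(2K)$ the resolvent identity gives
\[
R(a+is)=R(is)\bigl(I+aR(is)\bigr)^{-1},
\]
so $\|R(a+is)\|\le 2K$. For $a\ge 1/(2K)$, contractivity gives $\|R(a+is)\|\le 1/a\le 2K$. Thus
\[
\sup\{\|R(\lambda)\| : \mathrm{Re}\,\lambda>0\}\le 2K.
\]

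\emph{Step 2: Plancherel.} Fix $x\in\mathcal{H}$ and $\varepsilon\in(0,1]$. The function $t\mapsto e^{-\varepsilon t}S(t)x\,\mathbbm{1}_{t\ge0}$ lies in $L^2(\mathbb{R};\mathcal{H})$, and its Fourier transform is $s\mapsto R(\varepsilon+is)x$. Plancherel's theorem in the Hilbert space $\mathcal{H}$ therefore gives
\[
\smallint_0^\infty e^{-2\varepsilon t}\|S(t)x\|^2\,dt=\frac{1}{2\pi}\smallint_{\mathbb{R}}\|R(\varepsilon+is)x\|^2\,ds .
\]
At $\varepsilon=1$ the left side is at most $\tfrac12\|x\|^2$ by contractivity, so $\smallint_{\mathbb{R}}\|R(1+is)x\|^2ds\le\pi\|x\|^2$. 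The resolvent identity
\[
R(\varepsilon+is)=R(1+is)+(1-\varepsilon)R(\varepsilon+is)R(1+is)
\]
together with Step 1 yields
\[
\smallint_{\mathbb{R}}\|R(\varepsilon+is)x\|^2ds\le(1+2K)^2\pi\|x\|^2,
\]
uniformly in $\varepsilon\in(0,1]$. Letting $\varepsilon\to0^+$ and using monotone convergence, we obtain
\[
\smallint_0^\infty\|S(t)x\|^2dt\le C\|x\|^2,\qquad C\triangleq\tfrac12(1+2K)^2 .
\]

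\emph{Step 3: from $L^2$ decay to exponential decay.} By contractivity, $\|S(t)x\|\le\|S(s)x\|$ for $0\le s\le t$. Therefore
\[
t\|S(t)x\|^2\le\smallint_0^t\|S(s)x\|^2ds\le C\|x\|^2 .
\]
Choosing $t_0>4C$ gives $\|S(t_0)\|\le\tfrac12$. The semigroup property then yields $\|S(t)\|\le 2e^{-(\ln 2/t_0)t}$ for all $t\ge0$, which is exponential stability.

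The main obstacle is Step 2. One must pass from the resolvent bound on the axis to an $L^2$-in-time bound, and this is exactly where the Hilbert-space structure (Plancherel) is indispensable. Step 1 is needed to control the resolvent uniformly as $\varepsilon\to0^+$, rather than only at $\varepsilon=0$.
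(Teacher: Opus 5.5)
Your proof is correct and complete. The paper gives no proof of this statement: it is quoted from Pr\"uss (1984) and used only as a black box in Theorem \ref{thmex}, so your proposal supplies an argument the paper omits.

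The cited result is the general Gearhart--Pr\"uss--Huang theorem. For an arbitrary $\mathcal{C}_0$-semigroup on a Hilbert space, that theorem requires a uniform resolvent bound on the whole half-plane $\{\mathrm{Re}\,\lambda>0\}$. The axis-only form stated in the paper is valid because the semigroup is contractive, and the passage from the axis to the half-plane is exactly your Step 1: a Neumann series near the axis, and the Hille--Yosida bound $\|(\lambda I-B)^{-1}\|\le 1/\mathrm{Re}\,\lambda$ away from it. Your remaining steps are vector-valued Plancherel applied to $e^{-\varepsilon t}S(t)x$, the resolvent identity to keep the $L^2$ bound uniform as $\varepsilon\to0^+$, and monotonicity of $t\mapsto\|S(t)x\|$ in place of the Datko--Pazy lemma. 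These use contractivity once more, and the Hilbert structure exactly once, in Plancherel; that is the point where the result fails on general Banach spaces.

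Your route gives a short, self-contained proof under exactly the hypotheses the paper works with, and it has explicit constants. With $K=\sup_{s\in\mathbb{R}}\|(isI-B)^{-1}\|_{\mathcal{L}(\mathcal{H})}$, you get $\|S(t)\|\le 2e^{-(\ln 2/t_0)t}$ for any $t_0>2(1+2K)^2$. It also extends, with only cosmetic changes, to bounded semigroups $\|S(t)\|\le M$ at the cost of factors of $M$. The citation buys generality: it covers arbitrary $\mathcal{C}_0$-semigroups and identifies the growth bound with the abscissa of uniform resolvent boundedness. The paper never uses that extra generality.
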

	\subsection{Polynomially stability with $\theta_1<1$ or $\theta_2<1$}
The main result established in this part is stated in the following theorem.
\begin{theorem}\label{thmpoly}
Suppose that assumptions (A1)--(A3) are satisfied. If either $\theta_1<1$ or $\theta_2<1$, then the semigroup $S(t)$ generated by the problem \eqref{modelmain} is polynomially stable with the decay rate $t^{-\frac{1}{2-2\theta_{0}}}$ for smooth initial data. Specifically, for any initial state $X_0 \in \mathcal{D}({B})$, the estimate
\begin{align}
\|S(t)X_0\|_{\mathcal{H}} \leq C t^{-\frac{1}{2-2\theta_0}} \|BX_0\|_{\mathcal{H}}, \quad \forall t \geq 1,
\end{align}
holds, where $C > 0$ is a positive constant and $\theta_{0} = \min\{\theta_1,\theta_2\}$.
\end{theorem}
By virtue of Theorem \ref{thmBT}, the above theorem can be readily deduced from the following two lemmas.
\begin{lemma}\label{lemthm1-1}
Under the assumptions of Theorem \ref{thmpoly}, the inclusion $i\mathbb{R}\subset\rho({B})$ holds.
\end{lemma}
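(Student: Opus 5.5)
We argue by contradiction. The natural route is the standard one for contraction semigroups on this kind of augmented space, with one shortcut: work mode by mode in the eigenbasis $\{e_n\}$.

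\emph{Reduction to eigenvalues.} We know from Theorem \ref{thmwellposednerss} that $0\in\rho(B)$. Since $\rho(B)$ is open, it suffices to show that $i\lambda I-B$ is bijective for every real $\lambda\neq0$. Injectivity is the key point; surjectivity then follows by a perturbation argument. Writing $i\lambda I - B = (i\lambda B^{-1}-I)B$ and noting that $B^{-1}$ is compact on the $(v,u,p,q)$ components, one reduces surjectivity to a Fredholm-type argument. Because the history components make $B^{-1}$ non-compact on $\mathcal{H}$, I will instead solve the resolvent equation directly. For $(i\lambda-B)X=F$, the ODEs $i\lambda\eta+\eta_s=u-f_5$ and $\eta(0)=0$ give
\[
\eta(s)=\tfrac{1-e^{-i\lambda s}}{i\lambda}u+\smallint_0^s e^{-i\lambda(s-\tau)}f_5(\tau)\,d\tau ,
\]
and similarly for $\theta$. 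Substituting these into the second and fourth equations leaves a coupled elliptic system for $(v,p)$. Projecting onto each $e_n$, this becomes a $2\times2$ linear system, solvable whenever its determinant is nonzero. This is exactly the same condition as injectivity.

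\emph{Injectivity.} Suppose $BX=i\lambda X$ with $\lambda\in\mathbb{R}\setminus\{0\}$ and $X\in\mathcal{D}(B)$. Then $\mathrm{Re}\langle BX,X\rangle_{\mathcal{H}}=0$, so \eqref{disB} together with (A2) gives
\[
\smallint_0^\infty g_1\|A^{\frac{\theta_1}{2}}\eta\|^2\,ds=\smallint_0^\infty g_2\|A^{\frac{\theta_2}{2}}\theta\|^2\,ds=0 .
\]
Hence $\eta\equiv0$ and $\theta\equiv0$, since $g_j>0$. The equation $i\lambda\eta=u-\eta_s$ then forces $u=0$, and likewise $q=0$. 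From $i\lambda v=u$ and $i\lambda p=q$ we get $v=p=0$. Thus $X=0$.

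Here, unlike the single-memory case, both history variables are damped, so no unique-continuation or coupling argument is needed; dissipation alone kills all components.

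\emph{Surjectivity: the main obstacle.} In each mode $n$, the reduced system for $(v_n,p_n)$ has coefficient matrix
\[
\begin{pmatrix}-\varrho\lambda^2+\alpha\xi_n-\xi_n^{\theta_1}\hat G_1(\lambda) & -\gamma\beta\xi_n\\ -\gamma\beta\xi_n & -\mu\lambda^2+\beta\xi_n-\xi_n^{\theta_2}\hat G_2(\lambda)\end{pmatrix},
\]
where $\hat G_j(\lambda)=\smallint_0^\infty g_j(s)e^{-i\lambda s}\,ds$. If this determinant vanished for some $n$, we could build a nonzero eigenvector in $\mathcal{D}(B)$, contradicting injectivity. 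So the matrix is invertible for each $n$.

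The real difficulty is uniformity in $n$, which is needed for the resulting series to converge in $\mathcal{H}$. For large $\xi_n$ the matrix is dominated by $\xi_n\begin{pmatrix}\alpha&-\gamma\beta\\-\gamma\beta&\beta\end{pmatrix}$, which is positive definite because $\alpha_1>0$. Therefore its inverse is $O(\xi_n^{-1})$ uniformly for $n$ large, with $\lambda$ fixed. The finitely many remaining modes are handled by the invertibility just shown.

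With these bounds one checks that $(v,p)\in\mathcal{D}(A^{1/2})^2$, that $u=i\lambda v-f_1$ and $q=i\lambda p-f_3$, and that $\eta,\theta$ lie in $M_1,M_2$. The membership of the history variables follows as in \eqref{M1bound}, using the bound $|e^{-i\lambda s}|\le1$ together with (A2). This yields $X\in\mathcal{D}(B)$, so $i\lambda\in\rho(B)$ for all $\lambda\in\mathbb{R}$.
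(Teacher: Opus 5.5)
Your route is different from the paper's, and its structure is sound. Ruling out eigenvalues on $i\R$ with the dissipation identity \eqref{disB} is correct. Since $B^{-1}$ is not compact, solving the resolvent equation mode by mode is a legitimate way to obtain surjectivity.

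The gap is the uniform bound $\|T_n(\lambda)^{-1}\|=O(\xi_n^{-1})$ for your $2\times2$ modal matrix, call it $T_n(\lambda)$. You justify it by saying that $T_n(\lambda)$ is dominated by $\xi_n\begin{pmatrix}\alpha&-\gamma\beta\\-\gamma\beta&\beta\end{pmatrix}$, which is positive definite because $\alpha_1>0$. That holds only when $\theta_1<1$ and $\theta_2<1$. The lemma assumes just one of them is $<1$, and the case $\theta_1=1>\theta_2$ is exactly the one stressed in Remark \ref{rem:comparison}(iii).

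If $\theta_1=1$, the memory entry $\xi_n^{\theta_1}\hat G_1(\lambda)=\xi_n\hat G_1(\lambda)$ has the same order as $\alpha\xi_n$, so it is not a lower-order perturbation. The leading part becomes $\xi_n\begin{pmatrix}\alpha-\hat G_1(\lambda)&-\gamma\beta\\-\gamma\beta&\beta\end{pmatrix}$, with determinant $\beta\xi_n^2\bigl(\alpha_1-\hat G_1(\lambda)\bigr)$. Your positive-definiteness argument then collapses. With only $\alpha_1>0$ one could have $\kappa_1>\alpha_1$, and since $\mathrm{Re}\,\hat G_1(\lambda)\to\kappa_1$ as $\lambda\to0$, the Hermitian part of this leading matrix is indefinite for small $\lambda\neq0$. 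The missing input is (A3); for $\theta_1=1$ it yields $\alpha_1>\kappa_1\ge|\hat G_1(\lambda)|$.

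The repair is the modal form of \eqref{coer}. Since $\mathrm{Re}\,\hat G_j(\lambda)\le\kappa_j$ and $\xi_n^{\theta_j}\le\xi_1^{\theta_j-1}\xi_n$, for $x\in\mathbb{C}^2$ you have
\[
\mathrm{Re}\,\langle T_n(\lambda)x,x\rangle_{\mathbb{C}^2}\ge\xi_n\Bigl[\bigl(\alpha-\kappa_1\xi_1^{\theta_1-1}\bigr)|x_1|^2-2\gamma\beta|x_1||x_2|+\bigl(\beta-\kappa_2\xi_1^{\theta_2-1}\bigr)|x_2|^2\Bigr]-\max\{\varrho,\mu\}\lambda^2|x|^2 .
\]
By (A3) the bracket is at least $c_0|x|^2$ for some $c_0>0$. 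Hence $|T_n(\lambda)x|\ge\frac{c_0}{2}\xi_n|x|$ whenever $\xi_n\ge2\max\{\varrho,\mu\}\lambda^2/c_0$, for all $\theta_1,\theta_2\in[0,1]$.

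With this fix your construction goes through, once you add two routine points. First, check that the forcing lies in $\D(A^{-\frac12})$; the $f_5$-term needs Fubini together with $\int_\tau^\infty g_1\le g_1(\tau)/c_1$ from (A2). Second, your ODE should read $i\lambda\eta+\eta_s=u+f_5$, which is the equation your formula actually solves.

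For comparison, the paper never solves the resolvent equation. It sets $\hat\lambda=\sup\{R>0:[-iR,iR]\subset\rho(B)\}$ and uses the blow-up of the resolvent near $i\hat\lambda$ to produce approximate eigenvectors $X_n$ with bounded $\lambda_n$. It then shows $\|X_n\|_{\Hil}\to0$ via the dissipation identity and multipliers ($u_n$ in $M_1$, then $v_n,p_n$ in $H$), estimates it reuses in Lemma \ref{lemthm1-2}. Your version makes injectivity a one-line consequence of dissipation and avoids the limiting argument. The price is that it relies on every operator being diagonal in $\{e_n\}$, and on a uniform modal coercivity bound, which is precisely where (A3) must enter.
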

\begin{lemma}\label{lemthm1-2}
Under the assumptions of Theorem \ref{thmpoly}, one has
$$\sup_{\lambda\in\mathbb{R}} |\lambda|^{-2(1-\theta_0)} \left\| (i\lambda I - {B})^{-1} \right\|_{\mathcal{L}(\mathcal{H})} < \infty.$$
\end{lemma}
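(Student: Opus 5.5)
We argue by contradiction, since Lemma \ref{lemthm1-1} already gives $i\mathbb{R}\subset\rho(B)$ and the resolvent is continuous on $i\mathbb{R}$. If the supremum in Lemma \ref{lemthm1-2} were infinite, there would exist $\lambda_n\in\R$ with $|\lambda_n|\to\infty$ and $X_n=(v_n,u_n,p_n,q_n,\eta_n,\theta_n)^\top\in\D(B)$ with $\|X_n\|_{\Hil}=1$ such that
\[
\lambda_n^{2(1-\theta_0)}(i\lambda_n X_n-BX_n)=F_n\to0\quad\text{in }\Hil .
\]
Write $\ell_n:=\lambda_n^{-2(1-\theta_0)}$, so that $\|(i\lambda_n-B)X_n\|_{\Hil}=\ell_n\|F_n\|_{\Hil}=o(\ell_n)$. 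We drop the index $n$ and aim to show $\|X\|_{\Hil}\to0$, which is a contradiction.

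\textbf{Step 1 (dissipation).} Taking the real part of $\langle(i\lambda-B)X,X\rangle_{\Hil}$ and using \eqref{disB} together with (A2) gives
\[
\|\eta\|_{M_1}^2+\|\theta\|_{M_2}^2\le C\Bigl(-\smallint_0^\infty g_1'\|A^{\frac{\theta_1}{2}}\eta\|^2ds-\smallint_0^\infty g_2'\|A^{\frac{\theta_2}{2}}\theta\|^2ds\Bigr)=o(\ell_n).
\]

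\textbf{Step 2 (velocity in the fractional norm).} We use the history equation $i\lambda\eta+\eta_s-u=\ell_n f_5$. We pair it with $g_1(s)A^{\theta_1}u$ and integrate in $s$. The boundary term vanishes because $\eta(0)=0$, and $\smallint_0^\infty g_1\eta_s\,ds=-\smallint_0^\infty g_1'\eta\,ds$. Cauchy--Schwarz and the bound $|g_1'|\le c_0g_1$ then give
\[
\kappa_1\|A^{\frac{\theta_1}{2}}u\|^2\le C\bigl(|\lambda|\,\|\eta\|_{M_1}+\|\eta\|_{M_1}+o(\ell_n)\bigr)\|A^{\frac{\theta_1}{2}}u\|,
\]
so $\|A^{\frac{\theta_1}{2}}u\|=o(|\lambda|\ell_n^{1/2})=o(|\lambda|^{\theta_0})$. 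The same argument gives $\|A^{\frac{\theta_2}{2}}q\|=o(|\lambda|^{\theta_0})$.

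\textbf{Step 3 (velocity in $H$, by interpolation).} From the second component, $i\lambda\varrho u=-\alpha Av+\gamma\beta Ap+\dots$. Applying $A^{-1/2}$ and using $\|A^{1/2}v\|,\|A^{1/2}p\|\le C$ together with Step 1 yields $\|A^{-1/2}u\|=O(|\lambda|^{-1})$. The interpolation inequality
\[
\|u\|\le\|A^{\frac{\theta_1}{2}}u\|^{\frac{1}{1+\theta_1}}\,\|A^{-\frac12}u\|^{\frac{\theta_1}{1+\theta_1}}
\]
then gives $\|u\|=o\bigl(|\lambda|^{(\theta_0-\theta_1)/(1+\theta_1)}\bigr)=o(1)$, because $\theta_0\le\theta_1$. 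Likewise $\|q\|=o(1)$. If $\theta_1=0$, Step 2 already controls $\|u\|$ directly.

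\textbf{Step 4 (potential energy).} We pair the second equation with $v$ and the fourth with $p$, substituting $u=i\lambda v-\ell_n f_1$ and $q=i\lambda p-\ell_n f_3$. This expresses
\[
\alpha_1\|A^{\frac12}v\|^2-\kappa_1\|A^{\frac{\theta_1}{2}}v\|^2+\beta\|\gamma A^{\frac12}v-A^{\frac12}p\|^2-\kappa_2\|A^{\frac{\theta_2}{2}}p\|^2
\]
as $\varrho\|u\|^2+\mu\|q\|^2$ plus the memory terms $\smallint_0^\infty g_i\langle A^{\theta_i/2}\eta,A^{\theta_i/2}v\rangle\,ds$ and the $o(1)$ remainders. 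The memory terms are bounded by $\|\eta\|_{M_1}\|A^{1/2}v\|=o(1)$, and similarly for $\theta$. By \eqref{coer} and Steps 1--3, $\|A^{1/2}v\|,\|A^{1/2}p\|\to0$, hence $\|X\|_{\Hil}\to0$.

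\textbf{Main obstacle.} The delicate point is Step 3, namely converting the weak fractional control $\|A^{\theta_i/2}u\|=o(|\lambda|^{\theta_0})$ into $\|u\|=o(1)$. This is exactly where the exponent $2(1-\theta_0)$ is forced, and where one must check that the component with the larger order $\theta_i\ge\theta_0$ still closes. The argument depends on the negative-norm bound $\|A^{-1/2}u\|=O(|\lambda|^{-1})$, and the strong coupling between $u$ and $q$ has to be kept under control at this step. If the direct interpolation fails to close, we would instead split into spectral modes $\xi_k\lessgtr\lambda^2$ and argue mode by mode.
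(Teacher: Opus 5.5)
Your proof is correct, and it departs from the paper's at the decisive step, your Step 3.

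\textbf{How the routes differ.} The paper never uses $\|A^{-\frac12}u_n\|=O(|\lambda_n|^{-1})$. Instead it applies a second multiplier: it pairs the history equation with $g_1(s)A^{\theta_1-1}u_n$ and eliminates $i\lambda_nu_n$ through the momentum equation, to control $\|A^{\frac{\theta_1-1}{2}}u_n\|$. It then interpolates between the orders $\frac{\theta_1}{2}$ and $\frac{\theta_1-1}{2}$, with weights $1-\theta_1$ and $\theta_1$. You read the negative-norm bound directly off the momentum equation, using only $\|X_n\|_{\Hil}=1$ and $\theta_1\le1$. You then interpolate between $\frac{\theta_1}{2}$ and $-\frac12$, which gives the exponent $(\theta_0-\theta_1)/(1+\theta_1)\le0$.

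\textbf{What your route buys.} Besides being shorter, your route closes where the paper's, as written, does not. In the paper, \eqref{zj2}--\eqref{zj5} only give $\kappa_1\|A^{\frac{\theta_1-1}{2}}u_n\|^2=|\lambda_n|^{-w}o(1)$. That is $\|A^{\frac{\theta_1-1}{2}}u_n\|=|\lambda_n|^{-w/2}o(1)$, not the $|\lambda_n|^{-w}o(1)$ claimed in \eqref{eq:Th3.2..4c}. With the correct exponent the interpolation leaves $|\lambda_n|^{\theta_0(1-\theta_1)-(1-\theta_0)\theta_1/2}o(1)$. This does not tend to zero when, e.g., $\theta_0=\theta_1\in(0,1)$. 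The paper would need the extra input $\|A^{\frac{\theta_1}{2}}v_n\|+\|A^{\frac{\theta_1}{2}}p_n\|=O(|\lambda_n|^{\theta_1-1})$ in \eqref{zj3} to close.

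Likewise, your Step 4 substitutes $u=i\lambda v-\ell_nf_1$ and so produces $\varrho\|u\|^2$ directly. This sidesteps the term $i\lambda_n\ell_n\varrho\langle f_n^1,v_n\rangle$. The paper's appeal to Lemma \ref{lemthm1-1} carries that term over without comment, although $\lambda_n$ is no longer bounded.

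\textbf{Two minor points.} Your closing hedge about splitting into spectral modes is unnecessary, since the direct interpolation does close. Also, $|\lambda|^{-2(1-\theta_0)}\to\infty$ as $\lambda\to0$, so the supremum must be read over $|\lambda|\ge1$, as the paper also tacitly does. Your continuity argument only handles compact sets of $\lambda$ bounded away from $0$.
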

The rest of this part is devoted to the proofs of the above two lemmas.
	\begin{proof} [Proof of Lemma \ref{lemthm1-1}]
We prove this lemma by contradiction argument. Set
		\[
		\hat{\lambda} \triangleq \sup\{R>0 : [-Ri,Ri]\subset\varrho(B)\}.
		\]
If \(i\R\not\subset\varrho(B)\), since \(0\in\varrho(B)\) and \(\varrho(B)\) is open,  we get $\hat{\lambda}<\infty$. Then by the Banach-Steinhaus theorem, there exists a sequence
		\(X_{n}=(v_{n},u_{n},p_{n},q_{n},\eta_{n},\theta_{n})\in\D(B)\),
		\(\|X_{n}\|_{\Hil}=1\), with \(|\lambda_{n}|<\hat{\lambda}\), \(\lambda_{n}\to\hat{\lambda}\),
		such that \((i\lambda_{n}-B)X_{n}=F_{n}\triangleq (f_n^1,f_n^2,f_n^3,f_n^4,f_n^5,f_n^6)^\top\to0\) in \(\Hil\), i.e.,
		\begin{align}
				&i\lambda_{n}v_{n} - u_{n} = f_{n}^{1} \to 0 \quad \text{in } D(A^{\frac{1}{2}}),\label{eq:Th3.2.1}\\
				&i\lambda_{n}u_{n} - \frac{1}{\varrho}\Bigl(-\alpha Av_{n} + \gamma\beta Ap_{n}
				+ \kapp_{1}A^{\theta_{1}}v_{n} - \smallint_{0}^{\infty}g_{1}(s)A^{\theta_{1}}\eta_{n}\,ds\Bigr)
				= f_{n}^{2} \to 0\quad \text{in } H,  \label{eq:Th3.2.2}\\
				&i\lambda_{n}p_{n} - q_{n} = f_{n}^{3} \to 0\quad \text{in } D(A^{\frac{1}{2}}), \label{eq:Th3.2.3}\\
				&i\lambda_{n}q_{n} - \frac{1}{\mu}\Bigl(-\beta Ap_{n} + \gamma\beta Av_{n}
				+ \kapp_{2}A^{\theta_{2}}p_{n} - \smallint_{0}^{\infty}g_{2}(s)A^{\theta_{2}}\theta_{n}\,ds\Bigr)
				= f_{n}^{4} \to 0\quad \text{in } H, \label{eq:Th3.2.4}\\
				&i\lambda_{n}\eta_{n} + \eta_{n,s} - u_{n} = f_{n}^{5} \to 0\quad \text{in } \M{1}, \label{eq:Th3.2.5}\\
				&i\lambda_{n}\theta_{n} + \theta_{n,s} - q_{n} = f_{n}^{6} \to 0 \quad \text{in } \M{2}. \label{eq:Th3.2.6}
		\end{align}
		
To get contradiction, we will prove \(\|X_{n}\|_{\mathcal{H}}=o(1)\), i.e., $\lim_{n\to\infty}\|X_{n}\|_{\mathcal{H}}=0$.

	By \eqref{disB}, we get
		\[
		\mathrm{Re}\langle (i\lambda_{n}-B)X_{n},X_{n}\rangle
		= -\frac{1}{2}\smallint_{0}^{\infty}g_{1}'(s)\|A^{\frac{\theta_1}{2}}\eta_{n}\|^{2}ds
		-\frac{1}{2}\smallint_{0}^{\infty}g_{2}'(s)\|A^{\frac{\theta_2}{2}}\theta_{n}\|^{2}ds.
		\]
Since $\|(i\lambda_{n}-B)X_{n}\|_{\mathcal{H}}=o(1)$ and $\|X_{n}\|_{\mathcal{H}}=1$,  it follows from the above inequality that
\[-\frac{1}{2}\smallint_{0}^{\infty}g_{1}'(s)\|A^{\frac{\theta_1}{2}}\eta_{n}\|^{2}ds
		-\frac{1}{2}\smallint_{0}^{\infty}g_{2}'(s)\|A^{\frac{\theta_2}{2}}\theta_{n}\|^{2}ds=o(1).\]
Then using the assumptions (A1) and (A2), we get
		 \begin{align}
		\|\eta_{n}\|_{M_{1}}=o(1),  \label{eq:Th3.2.1c}\\
		\|\theta_{n}\|_{M_{2}}=o(1). \label{eq:Th3.2.2c}
		\end{align}
		
Since $\|A^{\frac{1}{2}}f_n^1\|=o(1)$, $\|A^{\frac{1}{2}}v_n\|\le\|X_n\|_{\mathcal{H}}=1$ and $|\lambda_n|\le\hat\lambda<\infty$, it follows from \eqref{eq:Th3.2.1} that $\|A^{\frac{1}{2}}u_n\|$ is bounded. Then by $\D(A^{\frac{1}{2}})\hookrightarrow\D(A^{\frac{\theta_i}{2}})$, we get $\|A^{\frac{\theta_i}{2}}u_n\|$ is bounded for $i=1,2$. Similarly, we have $\|A^{\frac{\theta_i}{2}}q_n\|$ is bounded for $i=1,2$.

	By taking the \(M_{1}\)-inner product of \eqref{eq:Th3.2.5} with \(u_{n}\), we have
\begin{equation}\label{fg1}
 \langle i\lambda_{n}\eta_{n},u_{n}\rangle_{M_{1}}
		+ \langle \eta_{n,s},u_{n}\rangle_{M_{1}}
		- \langle u_{n},u_{n}\rangle_{M_{1}} = \langle f_n^5, u_n\rangle_{M_1}.
\end{equation}
For the term on the right-hand side of \eqref{fg1}, since $\|A^{\frac{\theta_1}{2}}u_n\|$ is bounded and $\|f_n^5\|_{M_1}=o(1)$, we have
$$|\langle f_n^5, u_n\rangle_{M_1}|\le\smallint_0^\infty g_1(s)\left|\left\langle A^{\frac{\theta_1}{2}}f_n^5,A^{\frac{\theta_1}{2}}u_n\right\rangle\right| ds\le\sqrt{\kappa_1}\|A^{\frac{\theta_1}{2}}u_n\|\|f_n^5\|_{M_1}=o(1).$$
For the first term on the left-hand side of \eqref{fg1}, since $|\lambda_n|\le\hat\lambda<\infty$, $\|A^{\frac{\theta_1}{2}} u_n \| $ is bounded, and $\left\| \eta_n \right\|_{\M{1}} =o(1)$, we have
		\begin{align*}
			\left| \langle i\lambda_n \eta_n, u_n \rangle_{\M{1}} \right|
			&\leq |\lambda_n| \smallint_0^\infty g_1(s) \left| \left\langle A^{\frac{\theta_1}{2}} \eta_n, A^{\frac{\theta_1}{2}} u_n \right\rangle \right| ds \\
			&\leq |\lambda_n| \left\| A^{\frac{\theta_1}{2}} u_n \right\| \smallint_0^\infty g_1(s) \left\| A^{\frac{\theta_1}{2}} \eta_n \right\| ds \leq |\lambda_n| \sqrt{\kappa_1} \left\| A^{\frac{\theta_1}{2}} u_n \right\| \left\| \eta_n \right\|_{\M{1}} =o(1).
		\end{align*}
For the second term on the left-hand side of \eqref{fg1}, since the assumption (A2), $\|A^{\frac{\theta_1}{2}} u_n \| $ is bounded, and $\left\| \eta_n \right\|_{\M{1}} =o(1)$, we have
		\begin{align*}
			\left| \langle \eta_{n,s}, u_n \rangle_{\M{1}} \right|
			&= \left| \smallint_0^\infty g_1(s) \left\langle A^{\frac{\theta_1}{2}} \eta_{n,s}, A^{\frac{\theta_1}{2}} u_n \right\rangle ds \right| = \left| - \smallint_0^\infty g_1'(s) \left\langle A^{\frac{\theta_1}{2}} \eta_n, A^{\frac{\theta_1}{2}} u_n \right\rangle ds \right| \\
			&\leq c_0 \left| \smallint_0^\infty g_1(s) \left\langle A^{\frac{\theta_1}{2}} \eta_n, A^{\frac{\theta_1}{2}} u_n \right\rangle ds \right| \leq c_0 \left\| A^{\frac{\theta_1}{2}} u_n \right\| \sqrt{\kappa_1} \left\| \eta_n \right\|_{\M{1}} =o(1).
		\end{align*}
		Hence by \eqref{fg1} and the above analysis, it follows
$$\kapp_{1}\|A^{\frac{\theta_1}{2}}u_{n}\|^{2}=o(1),$$
which, together with $\D(A^{\frac{\theta_1}{2}}) \hookrightarrow H$, implies that
		 \begin{align} \label{eq:Th3.2.3c}
		\|u_{n}\|=o(1).
		\end{align}
	In the same way, by taking the \(M_{2}\)-inner product of \eqref{eq:Th3.2.6} with \(q_{n}\), one can get
	    \begin{align} \label{eq:Th3.2.4c}
	   	\|q_{n}\|=o(1).
	   \end{align}
		
	Substituting \eqref{eq:Th3.2.1} into \eqref{eq:Th3.2.2}, taking $H$-inner products with \(v_{n}\), we obtain
		\begin{equation} \label{eq:Th3.2.7}
		\begin{split}
		&-\varrho \lambda_n^2 \norm{v_n}^2 + \alpha_1 \norm{A^{\frac{1}{2}} v_n}^2 + \beta \gamma^2 \norm{A^{\frac{1}{2}} v_n}^2 - \beta \inner{A^{\frac{1}{2}} p_n}{\gamma A^{\frac{1}{2}} v_n} \\
		&- \kappa_1 \norm{A^{\frac{\theta_1}{2}} v_n}^2 + \smallint_0^\infty g_1(s) \inner{A^{\frac{\theta_1}{2}} \eta_n}{A^{\frac{\theta_1}{2}} v_n} ds =i\lambda_n\varrho\langle f_n^1,v_n\rangle+\varrho\langle f_n^2,v_n\rangle,
		\end{split}
	    \end{equation}
where we have used $\alpha=\alpha_1+\beta\gamma^2$.	 Substituting \eqref{eq:Th3.2.3} into \eqref{eq:Th3.2.4}, taking inner products with \(p_{n}\), we obtain
		\begin{equation} \label{eq:Th3.2.8}
		\begin{split}
		&-\mu \lambda_n^2 \norm{p_n}^2 + \beta \norm{A^{\frac{1}{2}} p_n}^2 - \beta \inner{\gamma A^{\frac{1}{2}} v_n}{A^{\frac{1}{2}} p_n} \\
		&- \kappa_2 \norm{A^{\frac{\theta_2}{2}} p_n}^2 + \smallint_0^\infty g_2(s) \inner{A^{\frac{\theta_2}{2}} \theta_n}{A^{\frac{\theta_2}{2}} p_n} ds =i\lambda_n\mu\langle f_n^3, p_n\rangle+\mu\langle f_n^4, p_n\rangle.
	    	\end{split}
        \end{equation}
		Adding \eqref{eq:Th3.2.7} and \eqref{eq:Th3.2.8} together, we get
		\begin{align*}
				&-\varrho \lambda_n^2 \|v_n\|^2 + \alpha_1 \|A^{\frac{1}{2}} v_n\|^2 - \kappa_1 \|A^{\frac{\theta_1}{2}} v_n\|^2 + \beta \left\| \gamma A^{\frac{1}{2}} v_n - A^{\frac{1}{2}} p_n \right\|^2 - \kappa_2 \|A^{\frac{\theta_2}{2}} p_n\|^2 \\
				&- \mu \lambda_n^2 \|p_n\|^2 + \smallint_0^\infty g_1(s) \left\langle A^{\frac{\theta_1}{2}} \eta_n(s), A^{\frac{\theta_1}{2}} v_n \right\rangle ds + \smallint_0^\infty g_2(s) \left\langle A^{\frac{\theta_2}{2}} \theta_n(s), A^{\frac{\theta_2}{2}} p_n \right\rangle ds\\
& =i\lambda_n\varrho\langle f_n^1,v_n\rangle+\varrho\langle f_n^2,v_n\rangle+i\lambda_n\mu\langle f_n^3, p_n\rangle+\mu\langle f_n^4, p_n\rangle.
		\end{align*}
		Using \eqref{eq:Th3.2.1c}, \eqref{eq:Th3.2.2c}, and the boundedness of  $\|A^{\frac{\theta_1}{2}} v_n\|$ and $\|A^{\frac{\theta_2}{2}} p_n\|$, we have
		\begin{align*}
			\left| \smallint_0^\infty g_1(s) \left\langle A^{\frac{\theta_1}{2}} \eta_n(s), A^{\frac{\theta_1}{2}} v_n \right\rangle ds \right|
			&\leq \|A^{\frac{\theta_1}{2}} v_n\| \sqrt{\kappa_1} \|\eta_n\|_{\M{1}}=o(1), \\
			\left| \smallint_0^\infty g_2(s) \left\langle A^{\frac{\theta_2}{2}} \theta_n(s), A^{\frac{\theta_2}{2}} p_n \right\rangle ds \right|
			&\leq \|A^{\frac{\theta_2}{2}} p_n\| \sqrt{\kappa_2} \|\theta_n\|_{\M{2}}=o(1).
		\end{align*}
Moreover, since $|\lambda_n|<\hat\lambda<\infty$, $|\langle f_n^i,v_n\rangle|\le\|f_n^i\|\|v_n\|$, $\|f_n^i\|=o(1)$, and $\|v_n\|$ is bounded for all $i=1,2,3,4$, we get
$$\left|i\lambda_n\varrho\langle f_n^1,v_n\rangle+\varrho\langle f_n^2,v_n\rangle+i\lambda_n\mu\langle f_n^3, p_n\rangle+\mu\langle f_n^4, p_n\rangle\right|=o(1).$$
		Therefore, combining \eqref{eq:Th3.2.3c} and \eqref{eq:Th3.2.4c}, we get
		\begin{align} \label{eq:Th3.2.5c}
		\alpha_{1}\|A^{\frac{1}{2}}v_{n}\|^{2} - \kapp_{1}\|A^{\frac{\theta_1}{2}}v_{n}\|^{2}
		+ \beta\|\gamma A^{\frac{1}{2}}v_{n} - A^{\frac{1}{2}}p_{n}\|^{2}
		- \kapp_{2}\|A^{\frac{\theta_2}{2}}p_{n}\|^{2} =o(1).
	    \end{align}
		
Combing \eqref{eq:Th3.2.1c}, \eqref{eq:Th3.2.2c}, \eqref{eq:Th3.2.3c}, \eqref{eq:Th3.2.4c}, \eqref{eq:Th3.2.5c}, we get $\|X_n\|_{\Hil}=o(1)$, which contradicts \(\|X_{n}\|_{\Hil}=1\).
Hence the desired result follows.
	\end{proof}

	\begin{proof}[Proof of Lemma \ref{lemthm1-2}.]
	    The proof by contradiction is still employed. If the conclusion is not true, then there exist \(X_{n}=(v_{n},u_{n},p_{n},q_{n},\eta_{n},\theta_{n})\in\D(B)\), \(\|X_{n}\|_{\Hil}=1\), and \(\lambda_{n}\to\infty\), such that \(\lambda_{n}^{2w}(i\lambda_{n}-B)X_{n}=F_{n}\to0\) in \(\Hil\), where
\[w\triangleq1-\theta_0.\]
That is
		\begin{align}
			&\lambda_{n}^{2w}(i\lambda_{n}v_{n} - u_{n}) = f_{n}^{1}\to0, \quad \text{in } D(A^{\frac{1}{2}})\label{eq:Th3.2..1}\\
			&\lambda_{n}^{2w}\Bigl(i\lambda_{n}u_{n} - \frac{1}{\varrho}
			\bigl(-\alpha Av_{n} + \gamma\beta Ap_{n} + \kapp_{1}A^{\theta_{1}}v_{n}
			- \smallint_{0}^{\infty}g_{1}(s)A^{\theta_{1}}\eta_{n}ds\bigr)\Bigr) = f_{n}^{2}\to0, \quad \text{in } H\label{eq:Th3.2..2}\\
			&\lambda_{n}^{2w}(i\lambda_{n}p_{n} - q_{n}) = f_{n}^{3}\to0, \quad \text{in } D(A^{\frac{1}{2}})\label{eq:Th3.2..3}\\
			&\lambda_{n}^{2w}\Bigl(i\lambda_{n}q_{n} - \frac{1}{\mu}
			\bigl(-\beta Ap_{n} + \gamma\beta Av_{n} + \kapp_{2}A^{\theta_{2}}p_{n}
			- \smallint_{0}^{\infty}g_{2}(s)A^{\theta_{2}}\theta_{n}ds\bigr)\Bigr) = f_{n}^{4}\to0, \quad \text{in } H\label{eq:Th3.2..4}\\
			&\lambda_{n}^{2w}(i\lambda_{n}\eta_{n} + \eta_{n,s} - u_{n}) = f_{n}^{5}\to0, \quad \text{in } \M{1}\label{eq:Th3.2..5}\\
			&\lambda_{n}^{2w}(i\lambda_{n}\theta_{n} + \theta_{n,s} - q_{n}) = f_{n}^{6}\to0.\quad \text{in } \M{2}\label{eq:Th3.2..6}
		\end{align}
	
By \eqref{disB}, we get
		\[
		\mathrm{Re}\langle\lambda_{n}^{2w} (i\lambda_{n}-B)X_{n},X_{n}\rangle
		= -\frac{1}{2}\lambda_{n}^{2w}\smallint_{0}^{\infty}g_{1}'(s)\|A^{\frac{\theta_1}{2}}\eta_{n}\|^{2}ds
		-\frac{1}{2}\lambda_{n}^{2w}\smallint_{0}^{\infty}g_{2}'(s)\|A^{\frac{\theta_2}{2}}\theta_{n}\|^{2}ds.
		\]
Since $\|\lambda_{n}^{2w}(i\lambda_{n}-B)X_{n}\|_{\mathcal{H}}=o(1)$ and $\|X_{n}\|_{\mathcal{H}}=1$,  it follows from the above inequality that
\[-\frac{1}{2}\smallint_{0}^{\infty}g_{1}'(s)\|A^{\frac{\theta_1}{2}}\eta_{n}\|^{2}ds
		-\frac{1}{2}\smallint_{0}^{\infty}g_{2}'(s)\|A^{\frac{\theta_2}{2}}\theta_{n}\|^{2}ds=\lambda_{n}^{-2w}o(1).\]
		Then using the assumptions (A1) and (A2), we get
		 \begin{align}
			\|\eta_{n}\|_{M_{1}} = |\lambda_{n}|^{-w}o(1),\label{eq:Th3.2..1c}\\
			\|\theta_{n}\|_{M_{2}} = |\lambda_{n}|^{-w}o(1). \label{eq:Th3.2..2c}
		\end{align}

		Multiplying \eqref{eq:Th3.2..5} by \(\lambda_{n}^{-2w}\), we get
		\begin{align} \label{eq:Th3.2..7}
		i\lambda_{n}\eta_{n} + \eta_{n,s} - u_{n} = \lambda_{n}^{-2w} f_n^5 \to0, \quad \text{in } \M{1}
		\end{align}
		Taking the \( \M{1} \)-inner product of equation \eqref{eq:Th3.2..7} with \( u_n \), we obtain
\begin{equation}\label{zj1}
\inner{i\lambda_n \eta_n}{u_n}_{\M{1}} + \inner{\eta_{n,s}}{u_n}_{\M{1}} - \inner{u_n}{u_n}_{\M{1}} =\inner{\lambda_{n}^{-2w} f_n^5}{u_n}_{\M{1}}.
\end{equation}
		For the first term of \eqref{zj1}, by using \eqref{eq:Th3.2..1c}, we get
		\begin{align} \label{eq:Th3.2..8}
			\left| \inner{i\lambda_n \eta_n}{u_n}_{\M{1}} \right|
			&= \left| \smallint_0^\infty i\lambda_n g_1(s) \inner{A^{\frac{\theta_1}{2}} \eta_{n}}{A^{\frac{\theta_1}{2}} u_n} ds \right| \nonumber \leq |\lambda_n| \left\| A^{\frac{\theta_1}{2}} u_n \right\| \smallint_0^\infty g_1(s) \left\| A^{\frac{\theta_1}{2}} \eta_n \right\| ds \nonumber \\
			&\leq |\lambda_n| \sqrt{\kappa_1} \norm{\eta_n}_{\M{1}} \norm{A^{\frac{\theta_1}{2}} u_n} = |\lambda_n|^{1-w} \norm{A^{\frac{\theta_1}{2}} u_n} \oone.
		\end{align}
		For the second term of \eqref{zj1}, by (A2) and \eqref{eq:Th3.2..1c}, it follows that
		\begin{align}\label{eq:Th3.2..9}
			\left| \inner{\eta_{n,s}}{u_n}_{\M{1}} \right|
			&= \left| \smallint_0^\infty g_1(s) \inner{A^{\frac{\theta_1}{2}} \eta_{n,s}}{A^{\frac{\theta_1}{2}} u_n} ds \right| \nonumber = \left| - \smallint_0^\infty g_1'(s) \inner{A^{\frac{\theta_1}{2}} \eta_n}{A^{\frac{\theta_1}{2}} u_n} ds \right| \\
			&\leq c_0 \left| \smallint_0^\infty g_1(s) \inner{A^{\frac{\theta_1}{2}} \eta_n}{A^{\frac{\theta_1}{2}} u_n} ds \right| \nonumber
			\leq c_0 \sqrt{\kappa_1} \norm{\eta_n}_{\M{1}} \norm{A^{\frac{\theta_1}{2}} u_n} \\
			&= |\lambda_n|^{-w} \norm{A^{\frac{\theta_1}{2}} u_n} \oone.
		\end{align}	
		Similarly, for the right-hand side of \eqref{zj1}, we have
		\begin{align}\label{eq:Th3.2..10}
			\left| \lambda_n^{-2w} \inner{f_n^5}{u_n}_{\M{1}} \right| \leq |\lambda_n|^{-2w}\sqrt{\kappa_1} \norm{f_n^5}_{\M{1}} \norm{A^{\frac{\theta_1}{2}} u_n} = |\lambda_n|^{-2w} \norm{A^{\frac{\theta_1}{2}} u_n} \oone.
		\end{align}
		Combining \eqref{zj1}, \eqref{eq:Th3.2..8}, \eqref{eq:Th3.2..9}, and \eqref{eq:Th3.2..10} , we get
		\[
		\kappa_1 \norm{A^{\frac{\theta_1}{2}} u_n}^2 \leq |\lambda_n|^{1-w} \norm{A^{\frac{\theta_1}{2}} u_n} \oone +|\lambda_n|^{-w} \norm{A^{\frac{\theta_1}{2}} u_n} \oone + |\lambda_n|^{-2w} \norm{A^{\frac{\theta_1}{2}} u_n} \oone,
		\]
		which is simplified to
		\begin{align} \label{eq:Th3.2..3c}
			\norm{A^{\frac{\theta_1}{2}} u_n} \leq |\lambda_n|^{1-w} \oone.
		\end{align}
		
		Taking the \( H \)-inner product of equation \eqref{eq:Th3.2..7} with \( A^{\theta_1-1} u_n \), then multiplying both sides by \( g_1(s) \) and integrating with respect to \( s \) from \( 0 \) to \( \infty \), we obtain
		\begin{align}\label{zj2}
			&\smallint_0^\infty i\lambda_n g_1(s) \inner{A^{\frac{\theta_1-1}{2}} \eta_n}{A^{\frac{\theta_1-1}{2}} u_n} ds + \smallint_0^\infty g_1(s) \inner{A^{\frac{\theta_1-1}{2}} \eta_{n,s}}{A^{\frac{\theta_1-1}{2}} u_n} ds \notag\\
			&- \smallint_0^\infty g_1(s) \norm{A^{\frac{\theta_1-1}{2}} u_n}^2 ds  =\lambda_n^{-2w} \smallint_0^\infty g_1(s) \inner{A^{\frac{\theta_1-1}{2}} f_n^5}{A^{\frac{\theta_1-1}{2}} u_n} ds.
		\end{align}
For the first term of \eqref{zj2}, by \eqref{eq:Th3.2..2}, we have
		\begin{align*}
&\left| \smallint_0^\infty i\lambda_n g_1(s) \inner{A^{\frac{\theta_1-1}{2}} \eta_n}{A^{\frac{\theta_1-1}{2}} u_n} ds \right|
			= \left| \smallint_0^\infty g_1(s) \inner{A^{\theta_1-1} \eta_n}{i\lambda_n u_n} ds \right|\\
			&=\Bigg| \frac{1}{\varrho} \Big[ \smallint_0^\infty g_1(s) \inner{A^{\theta_1-1} \eta_n} {-\alpha A v_n}ds  + \smallint_0^\infty g_1(s) \inner{A^{\theta_1-1} \eta_n }{\gamma \beta A p_n}ds  \\
			&\quad + \smallint_0^\infty g_1(s) \inner{A^{\theta_1-1} \eta_n}{\kappa_1 A^{\theta_1} v_n}ds - \smallint_0^\infty g_1(s) \inner{A^{\theta_1-1} \eta_n}{ \smallint_0^\infty g_1(s) A^{\theta_1} \eta_n ds }ds \Big]\\
			&\quad + \frac{1}{\lambda_n^{2w}} \smallint_0^\infty g_1(s) \inner{A^{\theta_1-1} \eta_n}{ f_n^2}ds  \Bigg|.
		\end{align*}
		Since $\D(A^{\frac{\theta_1}{2}}) \hookrightarrow \D(A^{\frac{2\theta_1-1}{2}})$, we obtain, for some constant $C>0$,
		\begin{align*}
			\Bigg|&\smallint_0^\infty g_1(s) \inner{A^{\theta_1-1} \eta_n}{i\lambda_n u_n} ds\Bigg| \\
			&\leq \frac{1}{\varrho} \Bigg[ \alpha \left| \smallint_0^\infty g_1(s) \inner{A^{\frac{\theta_1}{2}} \eta_n}{A^{\frac{\theta_1}{2}} v_n} ds \right| + \gamma \beta \left| \smallint_0^\infty g_1(s) \inner{A^{\frac{\theta_1}{2}} \eta_n}{A^{\frac{\theta_1}{2}} p_n} ds \right| \\
			&\quad + \kappa_1 \left| \smallint_0^\infty g_1(s) \inner{A^{\frac{\theta_1}{2}} \eta_n}{A^{\frac{\theta_1}{2}} v_n} ds \right| + \kappa_1 \smallint_0^\infty g_1(s) \norm{A^{\frac{2\theta_1-1}{2}} \eta_n}^2 ds \Bigg]\\
			&\quad + \frac{1}{|\lambda_n|^{2w}} \left| \smallint_0^\infty g_1(s) \inner{A^{\frac{\theta_1-1}{2}} \eta_n}{A^{\frac{\theta_1-1}{2}} f_n^2} ds \right|  \\
			&\leq \frac{1}{\varrho} \Bigg[ \alpha \norm{A^{\frac{\theta_1}{2}} v_n} \sqrt{\kappa_1} \norm{\eta_n}_{\M{1}} + \gamma \beta \norm{A^{\frac{\theta_1}{2}} p_n} \sqrt{\kappa_1} \norm{\eta_n}_{\M{1}} + \kappa_1 \norm{A^{\frac{\theta_1}{2}} v_n} \sqrt{\kappa_1} \norm{\eta_n}_{\M{1}}   \\
			&\quad+ \kappa_1 C^2 \norm{\eta_n}_{\M{1}}^2 \Bigg]+ \frac{C^2}{|\lambda_n|^{2w}} \norm{f_n^2} \sqrt{\kappa_1} \norm{\eta_n}_{\M{1}}  \\
&=\left({|\lambda_n|^{-w}} + {|\lambda_n|^{-3w}}\right)\oone={|\lambda_n|^{-w}}\oone,\end{align*}
		where we have used in \eqref{eq:Th3.2..1c}, \eqref{eq:Th3.2..2c}, and  the boundedness of $\left\| A^{\frac{\theta_1}{2}} v_n \right\|$, $\left\| A^{\frac{\theta_1}{2}} p_n \right\|$ and $\left\| {f}_n^2 \right\|$.

Combining the above two inequalities, we get
\begin{equation}\label{zj3}
\left| \smallint_0^\infty i\lambda_n g_1(s) \inner{A^{\frac{\theta_1-1}{2}} \eta_n}{A^{\frac{\theta_1-1}{2}} u_n} ds \right|={|\lambda_n|^{-w}}\oone.
\end{equation}

		For the second term of \eqref{zj2}, since $H \hookrightarrow \D(A^{{(\theta_{1}-1)}/{2}})$ for $\theta_1\le 1$, by the Assumption (A2), \eqref{eq:Th3.2..1c}, and the boundedness of $\|u_n\|$, we have, for some constant $C>0$,
		\begin{align}\label{zj4}
			&\left| \smallint_0^\infty g_1(s) \inner{A^{\frac{\theta_1-1}{2}} \eta_{n,s}}{A^{\frac{\theta_1-1}{2}} u_n} ds \right|
			= \left| -\smallint_0^\infty g_1'(s) \inner{A^{\frac{\theta_1-1}{2}} \eta_{n}}{A^{\frac{\theta_1-1}{2}} u_n} ds \right|\notag\\
			&\leq c_0 \| A^{\frac{\theta_1-1}{2}} u_n\| \smallint_{0}^{\infty} g_1(s)\| A^{\frac{\theta_1-1}{2}} \eta_n\|
			\leq C \| u_n\| \sqrt{\kappa_1} \norm{\eta_n}_{\M{1}}={|\lambda_n|^{-w}}\oone.
		\end{align}

By the same way, the right-hand side term of \eqref{zj2} can be estimated as
\begin{equation}\label{zj5}
\left| \frac{1}{\lambda_n^{2w}} \smallint_0^\infty g_1(s) \inner{A^{\frac{\theta_1-1}{2}} f_n^5}{A^{\frac{\theta_1-1}{2}} u_n} ds \right|={|\lambda_n|^{-2w}}\oone.
\end{equation}

		Substituting \eqref{zj3}--\eqref{zj5} into \eqref{zj2}, we obtain
		\begin{align} \label{eq:Th3.2..4c}
			\norm{A^{\frac{\theta_1-1}{2}} u_n} ={|\lambda_n|^{-w}}\oone.
		\end{align}

		Therefore, it follows from \eqref{eq:Th3.2..3c} and \eqref{eq:Th3.2..4c} that
		\[
		\|u_n\| \le \|A^{\frac{\theta_1}{2}}u_n\|^{1-\theta_{1}}\,
		\|A^{\frac{\theta_1-1}{2}}u_n\|^{\theta_{1}}=|\lambda_{n}|^{(1-w)(1-\theta_{1}) - w\theta_{1}} o(1).
		\]
Since $w=1-\theta_0\ge 1-\theta_1$, we get $(1-w)(1-\theta_{1}) - w\theta_{1}\le0$, then the above implies
\begin{equation}\label{zj6}
  \|u_n\|=o(1).
\end{equation}

In the same way as the proof of \eqref{zj6}, since $w=1-\theta_0\ge 1-\theta_2$, we get
\begin{equation}\label{zj7}
  \|q_n\|=o(1).
\end{equation}

By the same argument as in the proof of Lemma \ref{lemthm1-1}, it still holds that
\begin{equation}\label{zj8}
  \alpha_{1}\|A^{\frac{1}{2}}v_{n}\|^{2} - \kapp_{1}\|A^{\frac{\theta_1}{2}}v_{n}\|^{2}
			+ \beta\|\gamma A^{\frac{1}{2}}v_{n} - A^{\frac{1}{2}}p_{n}\|^{2}
			- \kapp_{2}\|A^{\frac{\theta_2}{2}}p_{n}\|^{2} =o(1),
\end{equation}

Thus, the estimates \eqref{zj6}--\eqref{zj8} imply \(\|X_{n}\|_{\Hil}=o(1)\),
	which	contradicts \(\|X_{n}\|_{\Hil}=1\). The proof is completed.
	\end{proof}

\begin{theorem}\label{thmopt}
	Suppose that assumptions (A1)--(A3) hold and that $\theta_1<1$ or $\theta_2<1$. If the memory kernels $g_j$, $j=1,2$,  are of exponential type, then the polynomial decay rate $t^{-\frac{1}{2-2\theta_0}}$ obtained in Theorem \ref{thmpoly}, where $\theta_0=\min\{\theta_1,\theta_2\}$, is optimal.
\end{theorem}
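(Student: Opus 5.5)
Since every $\mathcal{C}_0$-semigroup satisfies $\|(i\lambda-B)^{-1}\|\ge 1/\mathrm{dist}(i\lambda,\sigma(B))$, optimality will follow from Theorem \ref{thmBT} once we exhibit a sequence of eigenvalues $\lambda_n$ of $B$ with $|\mathrm{Im}\,\lambda_n|\to\infty$ and
\[
|\mathrm{Re}\,\lambda_n|\sim \frac{c}{|\mathrm{Im}\,\lambda_n|^{2-2\theta_0}} .
\]
Indeed, for $\omega<2-2\theta_0$ we then get $|\mathrm{Im}\lambda_n|^{-\omega}\|(i\,\mathrm{Im}\lambda_n-B)^{-1}\|\ge |\mathrm{Im}\lambda_n|^{-\omega}/|\mathrm{Re}\lambda_n|\to\infty$. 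Hence the resolvent bound \eqref{eq:lemma1} fails, and so does the decay $t^{-1/\omega}$ for any faster rate. I take exponential kernels $g_j(s)=\kappa_j c_j' e^{-c_j' s}$, so that (A1)--(A2) hold. The Dafermos history variables can then be handled explicitly.

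\textbf{Reduction to a scalar dispersion relation.} I seek eigenvectors of the form $v=a e_n$, $p=b e_n$, $\eta(s)=\phi(s)e_n$, $\theta(s)=\psi(s)e_n$. From $\lambda\eta+\eta_s=u=\lambda v$ with $\eta(0)=0$ we get $\phi(s)=a(1-e^{-\lambda s})$, and similarly for $\psi$. Substituting into the memory integrals gives
\[
\kappa_1 a-\int_0^\infty g_1\phi\,ds=\kappa_1 a\,\frac{c_1'}{c_1'+\lambda}=: \kappa_1 a\,\hat g_1(\lambda)
\]
for the first equation, and the analogue for the second. The eigenproblem therefore reduces to the $2\times2$ system
\[
\Big(\varrho\lambda^2+\alpha\xi_n-\kappa_1\hat g_1(\lambda)\xi_n^{\theta_1}\Big)a-\gamma\beta\xi_n b=0,\qquad
-\gamma\beta\xi_n a+\Big(\mu\lambda^2+\beta\xi_n-\kappa_2\hat g_2(\lambda)\xi_n^{\theta_2}\Big)b=0,
\]
and the eigenvalues are the zeros of its determinant $D_n(\lambda)$. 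One must also check that the resulting $X$ lies in $\mathcal{D}(B)\subset\mathcal{H}$; this is immediate since $\phi$ is bounded with bounded derivative and $g_j$ is integrable.

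\textbf{Asymptotics.} With the memory terms dropped, the undamped determinant is $(\varrho\lambda^2+\alpha\xi_n)(\mu\lambda^2+\beta\xi_n)-\gamma^2\beta^2\xi_n^2$. It has purely imaginary roots $\lambda=\pm i\sqrt{\sigma_k\xi_n}$, $k=1,2$, where $\sigma_{1,2}>0$ are the two roots of $(\alpha-\varrho\sigma)(\beta-\mu\sigma)=\gamma^2\beta^2$; these are distinct and positive because $\alpha\beta>\gamma^2\beta^2$. Since $\hat g_j(\lambda)= c_j'/\lambda+O(\lambda^{-2})$ and $\xi_n^{\theta_j}/\lambda\sim \xi_n^{\theta_j-1/2}$, the memory terms are a perturbation of relative size $\xi_n^{\theta_j-1}\to0$ when $\theta_j<1$. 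I make the ansatz $\lambda=i\sqrt{\sigma_k\xi_n}+\epsilon_n$ and expand $D_n$ to first order (implicit function theorem / Rouché on small discs). This gives
\[
\epsilon_n=-\sum_{j=1,2}C_{k,j}\,\xi_n^{\theta_j-1}+o(\xi_n^{\theta_0-1}),\qquad C_{k,j}\ \text{explicit},
\]
where $\mathrm{Re}\,C_{k,j}\ge0$ is a positive multiple of $\kappa_j c_j'$ times the squared component of the undamped eigenvector of mode $k$ in the $j$-th variable. The dominant term is the one with $\theta_j=\theta_0$, so
\[
\mathrm{Re}\,\lambda\sim -c\,\xi_n^{\theta_0-1}\sim -c'|\mathrm{Im}\,\lambda|^{2\theta_0-2},
\]
which is exactly the required rate. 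When $\theta_1=1>\theta_2$ (or the reverse), the $\theta_j=1$ term produces an $O(1)$ correction. I would check that it contributes to $\mathrm{Re}\,\lambda$ only an $O(\xi_n^{-1/2})$ quantity: expanding $\hat g_1 = c_1'/\lambda - c_1'^2/\lambda^2+\dots$, the leading $c_1'\xi_n/\lambda$ piece is a real multiple of $1/\lambda$ times $\xi_n$. This needs care and may force keeping the exact $\hat g_1$ rather than its expansion; I expect it still gives a bounded (not vanishing) shift of $\mathrm{Re}\,\lambda$.

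\textbf{Main obstacle.} The hardest step is proving $\mathrm{Re}\,C_{k,j_0}>0$ with a matching rate, rather than a cancellation. This is delicate because of the strong $\gamma\beta$-coupling and because, with two different orders, both perturbations enter at different scales. I would compute $C_{k,j}=\partial_{\text{pert}_j}D_n/\partial_\lambda D_n$ at the unperturbed root. Here the denominator is $\pm 2i\sqrt{\sigma_k\xi_n}\,\xi_n\,(\text{nonzero real})$, since the roots $\sigma_1\neq\sigma_2$ are simple. The numerator is $\kappa_j c_j'\xi_n^{\theta_j}\lambda^{-1}$ times the cofactor $(\mu\lambda^2+\beta\xi_n)$, resp. $(\varrho\lambda^2+\alpha\xi_n)$, which is real and equals $(\beta-\mu\sigma_k)\xi_n$, resp. $(\alpha-\varrho\sigma_k)\xi_n$. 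Both factors are nonzero, with product $\gamma^2\beta^2>0$, hence of the same sign. It then remains to verify that the sign combination makes $\mathrm{Re}\,\epsilon_n<0$ and nonvanishing, which the dissipativity \eqref{disB} guarantees ($\mathrm{Re}\,\lambda\le0$). The nonvanishing follows from the nonzero cofactor, and it is enough to do this for one branch $k$.
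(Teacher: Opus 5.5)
\textbf{The proposal has a genuine gap: you identify the wrong dominant term.}

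Your reduction to the $2\times2$ dispersion relation is fine, and so is the first-order formula $C_{k,j}=\partial_{\mathrm{pert}_j}D_n/\partial_\lambda D_n$. The conclusion you draw from them, however, is reversed. Set $x_k=\beta-\mu\sigma_k$ and $y_k=\alpha-\varrho\sigma_k$. At $\lambda_0=i\sqrt{\sigma_k\xi_n}$ your own cofactor computation gives
\[
\epsilon_n=-\frac{\kappa_1c_1'\,x_k\,\xi_n^{\theta_1-1}+\kappa_2c_2'\,y_k\,\xi_n^{\theta_2-1}}{2\sigma_k\,(\varrho x_k+\mu y_k)}+\text{(lower order)}.
\]
This is real to leading order, and both coefficients are strictly positive because $x_ky_k=\gamma^2\beta^2>0$. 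The exponents $\theta_j-1$ are negative, so as $\xi_n\to\infty$ the dominant term is the one with the \emph{larger} order, not $\theta_0$. Since the two contributions have the same sign, nothing can cancel it.

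Hence $|\mathrm{Re}\,\lambda_n|\sim c\,|\mathrm{Im}\,\lambda_n|^{-(2-2\max\{\theta_1,\theta_2\})}$. Your argument therefore only shows that $t^{-1/(2-2\max\{\theta_1,\theta_2\})}$ cannot be improved, which is strictly weaker than the claim whenever $\theta_1\neq\theta_2$.

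The case $\max\{\theta_1,\theta_2\}=1$ that worried you is the extreme instance of the same effect. The piece $\kappa_1c_1'\xi_n/\lambda$, divided by $\partial_\lambda D_n\propto\lambda$, is a real multiple of $\xi_n/\lambda^2$. So it goes entirely into the real part, not as an $O(\xi_n^{-1/2})$ amount, and $\mathrm{Re}\,\lambda_n\to-C_{k,1}\neq0$. These eigenvalues then stay a fixed distance from $i\mathbb{R}$, and $1/\mathrm{dist}(i\lambda,\sigma(B))$ gives no growth at all.

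The proposal is therefore complete only for $\theta_1=\theta_2<1$. Even there, one step needs correcting: $\phi(s)=a(1-e^{-\lambda s})$ is not bounded but grows like $e^{|\mathrm{Re}\lambda_n|s}$. This is harmless, since $2|\mathrm{Re}\,\lambda_n|<c_j'$ for large $n$.

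\textbf{Why this cannot be repaired by another branch, and how the paper's proof compares.}

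Every undamped mode has both components nonzero, since $b/a=y_k/(\gamma\beta)$, so both memories damp it. In the paper's notation, at real $\lambda$ near a resonance one has $P_1P_2\approx\gamma^2\beta^2\xi_n^2$, so $P_1$ and $P_2$ have the same sign. Also $I_j\approx -im_j/\lambda$. Hence $J_1=P_1I_2\xi_n^{\theta_2}$ and $J_2=P_2I_1\xi_n^{\theta_1}$ have the same phase, and $|D|\ge|\mathrm{Im}\,D|\gtrsim\xi_n^{\max\{\theta_1,\theta_2\}+1/2}$. This points to resolvent growth of order $|\lambda|^{2-2\max\{\theta_1,\theta_2\}}$ only. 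It is also consistent with the single-memory result recalled in the introduction, where memory on $v$ alone already gives $t^{-1/(2-2\theta_1)}$.

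The paper reaches $\theta_0$ in Case 1 only by choosing $\lambda_n$ so that $P_1P_2-\gamma^2\beta^2\xi_n^2+J_1=0$, i.e.\ by cancelling exactly the dominant term $J_1$. But for real $s$ near $s_0$, $\mathrm{Im}\,F_n(s)=(\varrho s-\alpha)\,\xi_n^{\theta_2-1}\,\mathrm{Im}\,I_2\neq0$. So the root $s_n$ is not real, and $i\lambda_n\notin i\mathbb{R}$. Carried out correctly, your computation argues against the stated optimality when $\theta_1\neq\theta_2$ rather than proving it.
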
	
\begin{proof}
We consider memory kernels decaying exponentially
\[
g_{1}(s) = m_{1}e^{-\mu_{1}s},\qquad g_{2}(s) = m_{2}e^{-\mu_{2}s},\qquad
m_{1},m_{2}>0,\;\; \mu_{1},\mu_{2}>0.
\]
Let \(Ae_{n} = \xi_{n}e_{n}\), \(\|e_{n}\|=1\), \(n\in\mathbb{N}\).

We consider
\[
F_{n} = (0, c_{1}e_{n}, 0, c_{2}e_{n}, 0, 0)
\]
\(X=(v,u,p,q,\eta,\theta)\) is the solution of the system \((i\lambda I-B)X = F_n\) can be considered as followed.
\begin{align}
	i\lambda v - u = 0, \label{eq:4.1}\\
	\varrho i\lambda u + \alpha Av - \gamma\beta Ap
	- \kapp_{1} A^{\theta_{1}}v
	+ \smallint_{0}^{\infty}g_{1}(s) A^{\theta_{1}}\eta(s)\,ds = \varrho c_{1}e_{n}, \label{eq:4.2}\\
	i\lambda p - q = 0, \label{eq:4.3}\\
	\mu i\lambda q + \beta Ap - \gamma\beta Av
	- \kapp_{2} A^{\theta_{2}}p
	+ \smallint_{0}^{\infty}g_{2}(s) A^{\theta_{2}}\theta(s)\,ds = \mu c_{2}e_{n}, \label{eq:4.4}\\
	i\lambda\eta + \eta_{s} - u = 0, \label{eq:4.5}\\
	i\lambda\theta + \theta_{s} - q = 0. \label{eq:4.6}
\end{align}

Using the equation \eqref{eq:4.1}, \eqref{eq:4.5} and \(\eta(0)=0\) we get
\[\eta(s) = v(1-e^{-i\lambda s})\]
Similarly, using the equation \eqref{eq:4.2}, \eqref{eq:4.6} and \(\theta(0)=0\) we get
\[\theta(s) = p(1-e^{-i\lambda s})\]
Substituting \(\eta,\theta\) into \eqref{eq:4.2} and \eqref{eq:4.4} and using \(A^{k}e_{n}=\xi_{n}^{k}e_{n}\), we get
\begin{align*}
	\varrho\lambda^{2}v - \alpha Av + \gamma\beta Ap
	+ \Bigl(\smallint_{0}^{\infty}g_{1}(s)e^{-i\lambda s}ds\Bigr) A^{\theta_{1}}v
	&= -\varrho c_{1}e_{n},\\
	\mu\lambda^{2}p - \beta Ap + \gamma\beta\ Av
	+ \Bigl(\smallint_{0}^{\infty}g_{2}(s)e^{-i\lambda s}ds\Bigr) A^{\theta_{2}}p
	&= -\mu c_{2}e_{n}.
\end{align*}
We are going to look for solutions of form
\[\begin{cases}
	v = \tau_1 e_n \\
	p = \tau_2 e_n
\end{cases}, \quad \tau_1, \tau_2 \in \mathbb{C}
\]
We will have the system
\[
\begin{cases}
	\left( \varrho \lambda^2 - \alpha \xi_n + \smallint_0^\infty g_1(s) e^{-i\lambda s} ds \cdot \xi_n^{\theta_1} \right) \tau_1 + \gamma \beta \xi_n \cdot \tau_2 = -\varrho c_1, \\
	\gamma \beta \xi_n \cdot \tau_1 + \left( \mu \lambda^2 - \beta \xi_n + \smallint_0^\infty g_2(s) e^{-i\lambda s} ds \cdot \xi_n^{\theta_2} \right) \tau_2 = -\mu c_2,
\end{cases}
\]
The system can be rewritten in matrix form as:
\[
\begin{pmatrix}
	P_1(\lambda^2) + I_1 \xi_n^{\theta_1} & \gamma \beta \xi_n \\
	\gamma \beta \xi_n & P_2(\lambda^2) + I_2 \xi_n^{\theta_2}
\end{pmatrix}
\begin{pmatrix}
	\tau_1 \\
	\tau_2
\end{pmatrix}
=
\begin{pmatrix}
	-\varrho c_1 \\
	-\mu c_2
\end{pmatrix},
\]
where
\[
P_1(\lambda^2) = \varrho \lambda^2 - \alpha \xi_n, \quad P_2(\lambda^2) = \mu \lambda^2 - \beta \xi_n, \quad I_j = \smallint_0^\infty g_j(s) e^{-i\lambda s} ds, \quad j=1,2.
\]
We take the specific choice \( c_1 = 0 \), \( c_2 = -\frac{1}{\mu} \), which reduces the right-hand side to \( \begin{pmatrix} 0 \\ 1 \end{pmatrix} \). By Cramer's rule, we compute the determinants for the solution:
\[
D_2 = \begin{vmatrix}
	P_1(\lambda^2) + I_1 \xi_n^{\theta_1} & 0 \\
	\gamma \beta \xi_n & 1
\end{vmatrix}
= P_1(\lambda^2) + I_1 \xi_n^{\theta_1}.
\]
\[
\begin{aligned}
	D &= \left(P_1(\lambda^2) + I_1 \xi_n^{\theta_1}\right)\left(P_2(\lambda^2) + I_2 \xi_n^{\theta_2}\right) - \gamma^2 \beta^2 \xi_n^2 \\
	&= P_1(\lambda^2)P_2(\lambda^2) - \gamma^2 \beta^2 \xi_n^2 + \underbrace{P_1(\lambda^2) I_2 \xi_n^{\theta_2}}_{J_1} + \underbrace{P_2(\lambda^2) I_1 \xi_n^{\theta_1}}_{J_2} + \underbrace{I_1 I_2 \xi_n^{\theta_1+\theta_2}}_{J_3}.
\end{aligned}
\]
The solution for \( \tau_2 \) is then
\begin{align} \label{eq:4.7}
	\tau_2 = \frac{D_2}{D} = \frac{P_1(\lambda^2) + I_1 \xi_n^{\theta_1}}{P_1(\lambda^2)P_2(\lambda^2) - \gamma^2 \beta^2 \xi_n^2 + J_1 + J_2 + J_3}.
\end{align}

\subsubsection*{Case 1. \(\boldsymbol{\theta_{1} \leq \theta_{2}}\)}
we take
\[
P_1(\lambda^2) P_2(\lambda^2) - \gamma^2 \beta^2 \xi_n^2 + J_1= 0.
\]
Expanding this, we obtain
\begin{align}\label{eq:4.11}
	(\varrho \lambda^2 - \alpha \xi_n) (\mu \lambda^2 - \beta \xi_n)  - \gamma^2 \beta^2 \xi_n^2+ (\varrho \lambda^2 - \alpha \xi_n) I_2 \xi_n^{\theta_2} = 0.
\end{align}
Setting $\lambda^2 = s \xi_n$ and dividing \eqref{eq:4.11} by $\xi_n^2$, we get
\[
F_n(s) = (\varrho s - \alpha) (\mu s - \beta)  - \gamma^2 \beta^2+ (\varrho s - \alpha) I_2 {\frac{1}{\xi_n^{1-\theta_2}}} = 0,
\]
where
\[
I_2 {\frac{1}{\xi_n^{1-\theta_2}}} = O\left( \frac{1}{\xi_n^{\frac{3}{2}-\theta_2}} \right) = o(1).
\]
Let $s_0$ be a positive root of the equation
\[
(\varrho s - \alpha) (\mu s - \beta) - \gamma^2 \beta^2 = 0.
\]
Then
\[
s_0 = \frac{\frac{\beta}{\mu} + \frac{\alpha}{\varrho} + \sqrt{\left( \frac{\beta}{\mu} + \frac{\alpha}{\varrho} \right)^2 - 4 \frac{\alpha_1 \beta}{\varrho \mu}}}{2}.
\]
we have
\[
\frac{\partial}{\partial s} F_n(s) = 2 \varrho \mu s - (\varrho \beta + \mu \alpha) + \frac{\varrho}{\xi_n^{1-\theta_2}} \frac{m_2}{\mu_2 + i \sqrt{s\xi_n} } + (\varrho s - \alpha) \frac{1}{\xi_n^{1-\theta_2}} \frac{-i m_2 \sqrt{\xi_n}}{2 \sqrt{s} (\mu_2 + i \sqrt{s\xi_n} )^2}
\]
and it is continuous for $s \in (s_0 - \delta, s_0 + \delta)$.\\
Therefore, by the Implicit Function Theorem, there exists a sufficiently large $N$ such that for all $n > N$, $F_n(s)$ has a unique solution $s_n$ in a small neighborhood of $s_0$, and $\lim_{n \to \infty} s_n = s_0.$

Introducing the notation $a_n \approx b_n$ when $\lim_{n\to\infty} \frac{|a_n|}{|b_n|}$ is a positive real number.

Hence, $\lambda_n = \sqrt{s_n \xi_n}$, and clearly $\lambda_n \approx \xi_n^{\frac{1}{2}}$. We consider $\lambda := \lambda_n \approx \xi_n^{\frac{1}{2}}$. In this case, equation \eqref{eq:4.7} becomes
\begin{align}\label{eq:4.12}
	\tau_{2,n} = \frac{P_1(\lambda_n^2) + I_1 \xi_n^{\theta_1}}{J_2 + J_3} .
\end{align}
For the exponential kernel, we have
\begin{align} \label{eq:4.9}
	I_j = \smallint_0^\infty m_j e^{-\mu_j s} e^{-i\lambda_n s} ds = \frac{m_j}{\mu_j + i\lambda_n }\approx m_{j}\lambda_{n}^{-1}.
\end{align}
Note that
\[
P_1(\lambda_n^2) = \varrho \lambda_n^2 - \alpha \xi_n = \varrho s_n \xi_n - \alpha \xi_n.
\]
Taking the limit as $n \to \infty$, we get
\[
\begin{aligned}
	\lim_{n \to \infty} P_1(\lambda_n^2) &= \lim_{n \to \infty} (\varrho s_0 \xi_n - \alpha \xi_n) = \lim_{n \to \infty} \frac{\varrho}{2} \left( 2 s_0 \xi_n - \frac{2 \alpha}{\varrho} \xi_n \right) \\
	&= \lim_{n \to \infty} \frac{\varrho}{2} \left( \frac{\beta}{\mu} + \frac{\alpha}{\varrho} + \sqrt{\left( \frac{\beta}{\mu} + \frac{\alpha}{\varrho} \right)^2 - 4 \frac{\alpha_1 \beta}{\varrho \mu}} - \frac{2 \alpha}{\varrho} \right) \xi_n \\
	&= \lim_{n \to \infty} \frac{\varrho}{2} \left( \frac{\beta}{\mu} - \frac{\alpha}{\varrho} + \sqrt{\left( \frac{\beta}{\mu} - \frac{\alpha}{\varrho} \right)^2 + \frac{4 \gamma^2 \beta^2}{\varrho \mu}} \right) \xi_n.
\end{aligned}
\]
Therefore,
\[
P_1(\lambda_n^2) \approx \xi_n \approx \lambda_n^2
\]
Similarly, we have
\[
P_2(\lambda_n^2) \approx \xi_n \approx \lambda_n^2
\]
Moreover,
\begin{align}\label{eq:4.13}
	J_2 \approx m_1 \lambda_n^{1+2\theta_1}, \quad J_3 \approx m_1 m_2 \lambda_n^{-2+2\theta_1+2\theta_2}
\end{align}
From \eqref{eq:4.9} and \eqref{eq:4.13}, we have
\[
\lim_{n \to \infty} \frac{|J_3|}{|J_2|} = 0 \quad \text{and} \quad \lim_{n \to \infty} \frac{|I_1 \xi_n^{\theta_1}|}{|P_1(\lambda_n^2)|} = 0
\]
Thus,
\[
\tau_{2,n} \approx \frac{P_1(\lambda_n^2)}{J_2} \approx \lambda_n^{1-2\theta_1}
\]
in this point, if \(X_n=(v_n,u_n,p_n,q_n,\eta_n,\theta_n)\) is the solution of the system \((i\lambda_n I-B)X_n = F_n\), then we obtain
\[
\norm{X_n} \geq \mu^{\frac{1}{2}} \norm{q_n} = \mu^{\frac{1}{2}} \norm{i\lambda_n p_n} = \mu^{\frac{1}{2}} |\lambda_n| |\tau_{2,n}| \approx \mu^{\frac{1}{2}} \lambda_n^{2 - 2\theta_1}\geq \varepsilon_2 \lambda_n^{2 - 2\theta_1}
\]
Furthermore, if the semigroup of the system decays polynomially with the rate \(t^{-\delta }\) with  \(\delta \ge \frac{1}{2-2\theta_{1}} \) , we have
\[
\varepsilon_2 \lambda_n^{2 - 2\theta_1} \leq \norm{X_n} \leq C \lambda_n^{1/\delta} \norm{F_n} \implies \varepsilon_2 \lambda_n^{2 - 2\theta_1-1/\delta}  \leq C .
\]
which is contradictory because  $\lambda_n^{2-2\theta_1-1/\sigma} \to \infty$ when $n \to \infty$. Thus, the decay rate $t^{-\frac{1}{2-2\theta_{1}}}$ is optimal.

\subsubsection*{Case 2. $\boldsymbol{\theta_2 \leq \theta_1}$}
Choosing $c_1 = -\frac{1}{\varrho}$ and $c_2 = 0$, the right-hand side becomes\( \begin{pmatrix} 1 \\ 0 \end{pmatrix} \), we can solve for $\tau_1$
\begin{align}\label{eq:4.14}
	\tau_1 = \frac{P_2(\lambda^2) + I_2 \xi_n^{\theta_2}}{P_1(\lambda^2) P_2(\lambda^2) - \gamma^2 \beta^2 \xi_n^2 + J_1 + J_2 + J_3}
\end{align}
Now taking
\[
P_1(\lambda^2) P_2(\lambda^2) - \gamma^2 \beta^2 \xi_n^2 + J_2 = 0.
\]
By the same method, we obtain $\lambda = \lambda_n = \sqrt{s_n\xi_n} $ with $\lim_{n \to \infty} s_n = s_0$. Then equation \eqref{eq:4.14} becomes
\begin{align}\label{eq:4.15}
	\tau_{1,n} = \frac{P_2(\lambda_n^2) + I_2 \xi_n^{\theta_2}}{J_1 + J_3}.
\end{align}
Similarly, we have
\[
P_1(\lambda_n^2) \approx \xi_n \approx \lambda_n^2, \quad P_2(\lambda_n^2) \approx \xi_n \approx \lambda_n^2.
\]
Therefore,
\begin{align}\label{eq:4.16}
	J_1 \approx m_2 \lambda_n^{1+2\theta_2}, \quad J_3 \approx m_1 m_2 \lambda_n^{-2+2\theta_1+2\theta_2}.
\end{align}
From \eqref{eq:4.9}, \eqref{eq:4.16}, we get
\[
\lim_{n \to \infty} \frac{|J_3|}{|J_1|} = 0, \quad \lim_{n \to \infty} \frac{|I_2 \xi_n^{\theta_2}|}{|P_2(\lambda_n^2)|} = 0.
\]
Then, we obtain
\[
\tau_{1,n} \approx \frac{P_2(\lambda_n^2)}{J_1} \approx \lambda_n^{1-2\theta_2}
\]
Furthermore, if the semigroup of the system decays polynomially with the rate \(t^{-\delta }\) with  \(\delta \ge \frac{1}{2-2\theta_{2}} \) , we obtain
\[
\norm{X_n} \geq \mu^{\frac{1}{2}} \norm{q_n} = \mu^{\frac{1}{2}} \norm{i\lambda_n p_n} = \mu^{\frac{1}{2}} |\lambda_n| |\tau_{2,n}| \approx \mu^{\frac{1}{2}} \lambda_n^{2 - 2\theta_2}\geq \varepsilon_3 \lambda_n^{2 - 2\theta_2}
\]

In this moment, if the semigroup of the system decays polynomially with the rate \(t^{-\delta }\) with  \(\delta \ge \frac{1}{2-2\theta_{2}} \) , we have
\[
\varepsilon_3 \lambda_n^{2 - 2\theta_2} \leq \norm{X_n} \leq C \lambda_n^{1/\delta} \norm{F_n} \implies \varepsilon_2 \lambda_n^{2 - 2\theta_2-1/\delta}  \leq C .
\]
which is contradictory since $\lambda_n^{2-2\theta_2-1/\sigma} \to \infty$ when $n \to \infty$. In this case, the optimal decay rate is $t^{-\frac{1}{2-2\theta_{2}}}$. In summary, the optimal decay rate is $t^{-\frac{1}{2-2\theta_{0}}}$,  $\theta_{0} = \min\{\theta_1,\theta_2\}$. The proof is completed.
\end{proof}

	\subsection{Exponential stability with $\theta_1=\theta_2=1$}
	The main result established in this section is stated in the following theorem.
\begin{theorem}\label{thmex}
	Suppose that assumptions (A1)--(A3) are satisfied. If $\theta_1=\theta_2=1$, then the semigroup $S(t)$ generated by the problem \eqref{modelmain} is  exponentially stable. Specifically, for any initial state $X_0 \in \mathcal{H}$, the estimate
		\begin{align*}
\|S(t)X_0\|_{\mathcal{H}} \leq C e^{-rt} \|X_0\|_{\mathcal{H}}, \quad \forall t \geq 0,
		\end{align*}
holds for some positive constant s $C $ and $r$.
	\end{theorem}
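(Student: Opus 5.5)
We will apply the Prüss criterion, Theorem \ref{thmG}. The spectral condition $i\mathbb{R}\subset\rho(B)$ needs no new work. The proof of Lemma \ref{lemthm1-1} never used $\theta_i<1$; it only used dissipativity \eqref{disB}, the embedding $\D(A^{\frac12})\hookrightarrow\D(A^{\frac{\theta_i}{2}})$, and boundedness of $\lambda_n$. Hence it applies verbatim with $\theta_1=\theta_2=1$. It therefore remains to prove the uniform resolvent bound \eqref{eq:lemma2}, which we again do by contradiction. Suppose there exist $X_n\in\D(B)$ with $\|X_n\|_{\Hil}=1$ and real $\lambda_n$ with $|\lambda_n|\to\infty$ such that $(i\lambda_n-B)X_n=F_n\to0$ in $\Hil$. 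This is exactly the system \eqref{eq:Th3.2..1}--\eqref{eq:Th3.2..6} with $w=0$, that is, with the weight $\lambda_n^{2w}$ replaced by $1$.

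The key steps repeat the proof of Lemma \ref{lemthm1-2} with $w=0$. First, \eqref{disB} and (A2) give $\|\eta_n\|_{M_1}=o(1)$ and $\|\theta_n\|_{M_2}=o(1)$. Second, we take the $M_1$-inner product of the history equation with $u_n$; this gives \eqref{eq:Th3.2..3c} in the form $\|A^{\frac12}u_n\|=|\lambda_n|\,o(1)$. Third, we take the inner product with $A^{\theta_1-1}u_n=u_n$, integrate against $g_1$, and use the $u$-equation to replace $i\lambda_n u_n$. This gives $\|u_n\|=o(1)$ directly, since now $A^{\frac{\theta_1-1}{2}}=I$.

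The third step is where $\theta_1=1$ is used essentially. In the estimate of the first term of \eqref{zj2}, the pairing $\int_0^\infty g_1\langle \eta_n, -\alpha Av_n+\gamma\beta Ap_n+\kappa_1 Av_n\rangle\,ds$ becomes $\int g_1\langle A^{\frac12}\eta_n,\cdot\rangle$ against $A^{\frac12}v_n$ and $A^{\frac12}p_n$. These quantities are bounded by $\|X_n\|$, so the whole term is $O(\|\eta_n\|_{M_1})=o(1)$. The quadratic memory term is bounded by $\kappa_1\|\eta_n\|_{M_1}^2$, which is also $o(1)$. The term $\int g_1'\langle\eta_n,u_n\rangle$ is controlled by (A2) together with $\|\eta_n\|_{M_1}$, using $\|\eta_n(s)\|\le\xi_1^{-1/2}\|A^{\frac12}\eta_n(s)\|$. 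This step yields $\kappa_1\|u_n\|^2=o(1)$, and symmetrically $\kappa_2\|q_n\|^2=o(1)$ from the $\theta$-equation with $\theta_2=1$. In other words, the interpolation step of Lemma \ref{lemthm1-2} is no longer needed, and no loss in $\lambda_n$ occurs.

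The step I expect to be the main obstacle is recovering the potential energy, that is, showing \eqref{zj8} when $\lambda_n$ is unbounded. In the proof of Lemma \ref{lemthm1-1}, the right-hand terms $i\lambda_n\varrho\langle f_n^1,v_n\rangle$ were small because $\lambda_n$ was bounded, and the kinetic terms $-\varrho\lambda_n^2\|v_n\|^2$ were small for the same reason. To handle this, I would rewrite $\lambda_n v_n=-i(u_n+f_n^1)$. Then $\varrho\lambda_n^2\|v_n\|^2=\varrho\|u_n+f^1_n\|^2=o(1)$, and $\lambda_n\langle f_n^1,v_n\rangle=\langle f_n^1,-i(u_n+f_n^1)\rangle=o(1)$, where we use $f_n^1\to0$ in $\D(A^{\frac12})\subset H$. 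The same substitution handles the $p_n$ terms. The memory couplings $\int g_1\langle A^{\frac12}\eta_n,A^{\frac12}v_n\rangle$ are $o(1)$ because $\|\eta_n\|_{M_1}=o(1)$. Summing the two multiplier identities \eqref{eq:Th3.2.7}--\eqref{eq:Th3.2.8} then gives \eqref{zj8}. Combining this with the coercivity \eqref{coer} yields $\|X_n\|_{\Hil}=o(1)$, a contradiction. Theorem \ref{thmG} then gives the exponential estimate.
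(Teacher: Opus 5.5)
Your proposal is correct and follows essentially the same route as the paper: Prüss's criterion, $i\mathbb{R}\subset\rho(B)$ exactly as in Lemma \ref{lemthm1-1}, $\|\eta_n\|_{M_1},\|\theta_n\|_{M_2}=o(1)$ from dissipativity, and the multiplier $g_1(s)u_n$ (resp.\ $g_2(s)q_n$) in the history equations, with $i\lambda_n u_n$ eliminated via the $u$-equation, to get $\|u_n\|,\|q_n\|=o(1)$ (your intermediate bound on $\|A^{\frac12}u_n\|$ is not needed). Your substitution $\lambda_n v_n=-i(u_n+f_n^1)$ (and likewise for $p_n$) in the potential-energy step is a genuine improvement in rigor, because the paper only cites the bounded-$\lambda_n$ argument of Lemma \ref{lemthm1-1} there and never addresses that $\lambda_n$ is now unbounded.
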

\begin{proof}
By Theorem \ref{thmG}, we only need to show $i\R\subset\varrho(B)$ and \eqref{eq:lemma2} holds. Since $i\R\subset\varrho(B)$ follows exactly the same lines as Lemma \ref{lemthm1-1}. We only need to prove \eqref{eq:lemma2}. By contradiction, assume \eqref{eq:lemma2} is not true. Then there exists a sequence $X_n = (v_n, u_n, p_n, q_n, \eta_n, \theta_n) \in \D(B)$,
		with $\|X_n\|_{\mathcal{H}} = 1$ and $\lambda_n \to \infty$, such that
		\[
		(i\lambda_n - B)X_n = (f_n^1, f_n^2, f_n^3, f_n^4, f_n^5, f_n^6) \to 0 \quad \text{in } \mathcal{H}.
		\]
	Expanding the equation, we obtain,
		\begin{align}
			&i\lambda_{n}v_{n} - u_{n} = f_{n}^{1} \to 0 \quad \text{in } D(A^{\frac{1}{2}}),\label{eq:Th3.3.1}\\
			&i\lambda_{n}u_{n} - \frac{1}{\varrho}\Bigl(-\alpha Av_{n} + \gamma\beta Ap_{n}
			+ \kapp_{1}Av_{n} - \smallint_{0}^{\infty}g_{1}(s)A\eta_{n}\,ds\Bigr)
			= f_{n}^{2} \to 0\quad \text{in } H,  \label{eq:Th3.3.2}\\
			&i\lambda_{n}p_{n} - q_{n} = f_{n}^{3} \to 0\quad \text{in } D(A^{\frac{1}{2}}), \label{eq:Th3.3.3}\\
			&i\lambda_{n}q_{n} - \frac{1}{\mu}\Bigl(-\beta Ap_{n} + \gamma\beta Av_{n}
			+ \kapp_{2}Ap_{n} - \smallint_{0}^{\infty}g_{2}(s)A\theta_{n}\,ds\Bigr)
			= f_{n}^{4} \to 0\quad \text{in } H, \label{eq:Th3.3.4}\\
			&i\lambda_{n}\eta_{n} + \eta_{n,s} - u_{n} = f_{n}^{5} \to 0\quad \text{in } \M{1}, \label{eq:Th3.3.5}\\
			&i\lambda_{n}\theta_{n} + \theta_{n,s} - q_{n} = f_{n}^{6} \to 0 \quad \text{in } \M{2}. \label{eq:Th3.3.6}
		\end{align}

By the same proof as \eqref{eq:Th3.2.1c} and \eqref{eq:Th3.2.2c}, it follows
		\begin{align}
		\|\eta_n\|_{\M{1}} =o(1), \label{eq:Th3.3.7}\\
		\|\theta_n\|_{\M{2}}=o(1). \label{eq:Th3.3.8}
		\end{align}
		
		Taking the $H$-inner product of equation \eqref{eq:Th3.3.5} with $g_1(s) u_n$ and integrating from $0$ to $\infty$ with respect to $s$, we get
		\[
		\smallint_0^\infty g_1(s) \left\langle i\lambda_n \eta_n, u_n \right\rangle ds + \smallint_0^\infty g_1(s) \left\langle \eta_{n,s}, u_n \right\rangle ds - \smallint_0^\infty g_1(s) \left\langle u_n, u_n \right\rangle ds =\smallint_0^\infty g_1(s)\left\langle f^5_n, u_n \right\rangle ds.
		\]
		For the first term, by equation \eqref{eq:Th3.3.2}, using the Cauchy-Schwarz inequality, the continuous embedding $D(A^{\frac{1}{2}}) \hookrightarrow H$ and \eqref{eq:Th3.3.7}, we have
		\[
		\begin{aligned}
			&\left| \smallint_0^\infty g_1(s) \left\langle i\lambda_n \eta_n, u_n \right\rangle ds \right| = \left| \smallint_0^\infty g_1(s) \left\langle \eta_n, i\lambda_n u_n \right\rangle ds \right| \\
			&\leq \frac{1}{\varrho} \left| \smallint_0^\infty g_1(s) \left\langle \eta_n, -\alpha A v_n \right\rangle ds + \smallint_0^\infty g_1(s) \left\langle \eta_n, \gamma \beta A p_n \right\rangle ds \right. \\
			&\quad \left. + \smallint_0^\infty g_1(s) \left\langle \eta_n, \kappa_1 A v_n \right\rangle ds - \smallint_0^\infty g_1(s) \left\langle \eta_n, \smallint_0^\infty g_1(s) A \eta_n ds \right\rangle ds \right| + \left| \smallint_0^\infty g_1(s) \left\langle \eta_n , f_n^2 \right\rangle \right|\\
			&\leq \frac{1}{\varrho} \left\{ \alpha \left\| A^{\frac{1}{2}} v_n \right\| \sqrt{\kappa_1} \|\eta_n\|_{\M{1}} + \gamma \beta \left\| A^{\frac{1}{2}} p_n \right\| \sqrt{\kappa_1} \|\eta_n\|_{\M{1}} \right. \\
			&\quad \left. + \kappa_1 \left\| A^{\frac12} v_n \right\| \sqrt{\kappa_1} \|\eta_n\|_{\M{1}} + \kappa_1 \|\eta_n\|_{\M{1}}^2 \right\} + \sqrt{\kappa_1} \|f_n^2\| \|\eta_n\|_{\M{1}}^2 =o(1),
		\end{aligned}
		\]
		where we used the boundedness of $\| A^{\frac{1}{2}} v_n \|$,  $\| A^{\frac{1}{2}} p_n \|$, $\|f_n^2\|$ and \eqref{eq:Th3.3.7}.

		For the second term, using integration by parts, the Cauchy-Schwarz inequality, the continuous embedding $D(A^{\frac{1}{2}}) \hookrightarrow H$ with embedding constant $C$, \eqref{eq:Th3.3.7} and the assumption (A2), we get
		\[
		\begin{aligned}
			\left| \smallint_0^\infty g_1(s) \left\langle \eta_{n,s}, u_n \right\rangle ds \right| &= \left| -\smallint_0^\infty g_1'(s) \left\langle \eta_n, u_n \right\rangle ds \right| \leq c_0 \left| \smallint_0^\infty g_1(s) \left\langle \eta_n, u_n \right\rangle ds \right| \\
			&\leq c_0 \|u_n\| \smallint_0^\infty g_1(s) \|\eta_n\| ds \leq c_0C \|u_n\| \smallint_0^\infty g_1(s) \left\| A^{\frac{1}{2}} \eta_n \right\| ds \\
			&\leq c_0C \|u_n\| \sqrt{\kappa_1} \|\eta_n\|_{\M{1}} =o(1),
		\end{aligned}
		\]
		where we used the boundedness of $\|u_n \|$ and \eqref{eq:Th3.3.7}.

		For the right-hand side, using Cauchy-Schwarz inequality, the continuous embedding $D(A^{\frac{1}{2}}) \hookrightarrow H$, the boundedness of $\|u_n\|$ and $\|f^5_n\|_{\M{1}} = o(1)$, we obtain
		\begin{align*}
		\left|\smallint_0^\infty g_1(s)\left\langle f^5_n, u_n \right\rangle ds\right| \leq \smallint_0^\infty g_1(s)\| f^5_n\| \| u_n\| ds \leq C\smallint_0^\infty g_1(s)\| A^{\frac{1}{2}}f^5_n\| \| u_n\| ds \leq C\|u_n\|\sqrt{\kappa_1} \|f^5_n\|_{\M{1}} =o(1).
	    \end{align*}
Then above analysis shows that
		\[
		\left| \smallint_0^\infty g_1(s) \left\langle u_n, u_n \right\rangle ds \right| = \kappa_1 \|u_n\|^2 =o(1),
		\]
		which implies
		\begin{align}
			\|u_n\| =o(1). \label{eq:Th3.3.9}
		\end{align}
		Similarly, we can obtain
		\begin{align}
			\|q_n\| =o(1).  \label{eq:Th3.3.10}
		\end{align}
		By the same argument as in the proof of Lemma 3.4, it still hold that
		\begin{align}\label{eq:Th3.3.11}
		\alpha_1 \left\| A^{\frac{1}{2}} v_n \right\|^2 - \kappa_1 \left\| A^{\frac{\theta_1}{2}} v_n \right\|^2 + \beta \left\| \gamma A^{\frac{1}{2}} v_n - A^{\frac{1}{2}} p_n \right\|^2 - \kappa_2 \left\| A^{\frac{\theta_2}{2}} p_n \right\|^2 =o(1).
		\end{align}
		Combining \eqref{eq:Th3.3.7}, \eqref{eq:Th3.3.8}, \eqref{eq:Th3.3.9}, \eqref{eq:Th3.3.10} and \eqref{eq:Th3.3.11}, we get $\|X_n\|_{\mathcal{H}} =o(1)$, which contradicts $\|X_n\|_{\mathcal{H}} = 1$, hence Theorem \ref{thmex} holds. The proof is completed.
\end{proof}


\end{document}